\newcommand{\ee}{\mathbf{e}}
\newcommand{\bt}{b_{T}}
\newcommand{\RR}{\mathcal{R}}
\newcommand{\ZZ}{\mathcal{Z}}
\newcommand{\NN}{\mathcal{N}}
\newcommand{\T}{\mathbb{T}}
\newcommand{\R}{\mathbb{R}}
\newcommand{\M}{\mathbb{M}}
\newcommand{\E}{\mathbb{E}}
\newcommand{\TT}{\mathcal{T}}
\def\build#1_#2^#3{\mathrel{ \mathop{\kern 0pt#1}\limits_{#2}^{#3}}}
\def\d{\mathrm{d}}
\def\eps{\varepsilon}
\renewcommand{\P}{\mathbb{P}}
\newtheorem{thm}{Theorem}
\newtheorem{lmm}[thm]{Lemma}
\newtheorem{prp}[thm]{Proposition}
\title{Long Brownian bridges in hyperbolic spaces converge to
  Brownian trees}
\author{Xinxin Chen\thanks{Institut Camille Jordan, Universit\'e Claude
    Bernard Lyon 1}\ \& Grégory Miermont\thanks{Ecole Normale
    Supérieure de Lyon et Institut Universitaire de France. Research
    supported in part by ANR grants GrAAl (ANR-14-CE25-0014) and
    Liouville (ANR-15-CE40-0013)}\\
Université de Lyon}
\begin{document}

\selectlanguage{english}

\maketitle

\begin{abstract}
  We show that the range of a long Brownian bridge in the hyperbolic
  space converges after suitable renormalisation to the Brownian
  continuum random tree. This result is a relatively
  elementary consequence of
  \begin{itemize}
  \item A theorem by Bougerol and Jeulin,
  stating that the rescaled radial process converges to the normalized
  Brownian excursion, 
\item A property of invariance under re-rooting,
  \item
The hyperbolicity of the ambient space in the sense of
  Gromov.
\end{itemize}
A similar result is obtained for the rescaled infinite Brownian loop
in hyperbolic space. 
\end{abstract}

\section{Introduction}\label{sec:introduction}

\subsection{Brownian bridges in hyperbolic space}\label{sec:brown-bridg-hyperb}

This work deals with geometric properties of the range of long
Brownian bridges in hyperbolic space.  For $d\geq 2$, let $H=H_d$ be
the $d$-dimensional hyperbolic space, and let $o$ be a distinguished
point taken as origin. For every $T>0$ we let $\bt$ be the Brownian
bridge on $H$ from the origin $o$ and with duration
$T$. Heuristically, this can be seen as Brownian motion $(B(t),t\geq
0)$ (the diffusion on $H$ whose generator is half the
Laplace-Beltrami operator) in restriction to the interval $[0,T]$ and
conditioned on the event $\{B(0)=B(T)=o\}$.

There are several natural (and equivalent) ways to make sense of this
singular conditioning. Let $p_t(x,y)$ be the transition densities for standard
Brownian motion on $H$, with respect to the standard volume measure
$m$ on $H$. Then the finite-dimensional distributions of
$\bt$ are given, for $0<t_1<\ldots<t_k<T$ and $x_1,\ldots,x_k\in H$,
by the formula
\begin{align*}
 &\frac{\P(\bt(t_1)\in \d x_1,\ldots,\bt(t_k)\in \d x_k)}{m(\d
  x_1)\ldots m(\d
  x_k)}\\
&=\frac{p_{t_1}(o,x_1)p_{t_2-t_1}(x_1,x_2)\ldots
  p_{t_k-t_{k-1}}(x_{k-1},x_k)p_{T-t_k}(x_k,o)}{p_T(o,o)}\, .
\end{align*}
Let also $\rho_{T}(t)=d_H(o,\bt(t)),0\leq t\leq T$ be the ``radial part'' of
$\bt$. 

We let $\ee$ be a normalized Brownian excursion \cite[Chapter
XII.4]{revuz_continuous_1991}.
% The simplest way to
% define it is as follows: let $(W_t,t\geq 0)$ be a standard
% $1$-dimensional Brownian motion, and let $g=\sup\{t<1:W_t=0\}$ and
% $d=\inf\{t>1:W_t=0\}$. Then the rescaled process
% $$\frac{|W_{g+t(d-g)}|}{\sqrt{d-g}}\, ,\qquad 0\leq t\leq 1\, ,$$
% has same distribution as $\ee$. 
Using analytical expressions for the heat kernel in $H$ and stochastic
differential equations techniques, Bougerol and Jeulin
\cite{bougerol_brownian_1999} proved the following limit theorem.

\begin{thm}[\cite{bougerol_brownian_1999}]
  \label{sec:introduction-4}
One has the convergence in distribution 
$$\left(\frac{\rho_{T}(Tt)}{\sqrt{T}},0\leq t\leq 1\right)\overset{(d)}{\underset{T\to\infty}{\longrightarrow}}(\ee_t,0\leq
t\leq 1)\, ,$$
for the uniform topology on the space $\mathcal{C}([0,1],\R)$ of
continuous functions on $[0,1]$. 
\end{thm}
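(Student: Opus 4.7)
The plan is to prove the convergence by combining convergence of finite-dimensional distributions with tightness. By the rotational invariance of $H_d$ about $o$, the radial process $R_t:=d_H(o,B_t)$ is itself a one-dimensional Markov diffusion on $[0,\infty)$ solving
$$dR_t=dW_t+\frac{d-1}{2}\coth(R_t)\,dt,$$
with Bessel-$d$-like behaviour near $0$, and the radial part $\rho_T$ of $\bt$ has precisely the law of $R$ conditioned on $R_T=0$. Writing $k_t(r_1,r_2)$ for the transition density of $R$ with respect to Lebesgue measure on $[0,\infty)$, the joint density of $(\rho_T(Tt_i))_{1\leq i\leq k}$ at $(r_i)_{i=1}^k$ reads
$$f_T(r_1,\ldots,r_k)=\frac{k_{Tt_1}(0,r_1)\prod_{i=2}^k k_{T(t_i-t_{i-1})}(r_{i-1},r_i)\cdot \hat k_{T(1-t_k)}(r_k)}{\hat k_T(0)},$$
where $\hat k_s(r):=\lim_{r'\downarrow 0}k_s(r,r')/r'^{\,d-1}$ regularises the Bessel-$d$ degeneracy at $0$.

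The core step is the asymptotic analysis of $k_{Ts}(\sqrt Tx,\sqrt Ty)$ as $T\to\infty$, uniformly on compact subsets of $(0,\infty)^3$. Starting from the explicit hyperbolic heat kernel (Millson's recursion formula for odd $d$, an integral representation in even $d$) and incorporating the spherical area factor $\omega_{d-1}(\sinh r)^{d-1}$, one obtains an asymptotic of the schematic form
$$k_{Ts}(\sqrt Tx,\sqrt Ty)\ \sim\ \frac{B_d(x,y,s)}{\sqrt{Ts}}\exp\!\left(\tfrac{d-1}{2}\sqrt T(y-x)-\tfrac{(d-1)^2 Ts}{8}-\tfrac{(y-x)^2}{2s}\right),$$
with $B_d$ a smooth prefactor, and analogous expansions for $k_{Ts}(0,\sqrt Ty)$, $\hat k_{Ts}(\sqrt Tx)$ and $\hat k_T(0)$, with (crucially) a \emph{linear-in-$y$} prefactor at the entry and a \emph{linear-in-$x$} prefactor at the exit, independent of $d$ (traceable back to the Davies--Mandouvalos asymptotic $p_t(r)\asymp t^{-d/2}(1+r+t)^{(d-3)/2}(1+r)\,\exp(-((d\!-\!1)/2)^2 t/2-r^2/(2t)-(d\!-\!1)r/2)$). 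When these are substituted into $f_T$ after the rescaling $r_i=\sqrt Tx_i$ and multiplied by the Jacobian $T^{k/2}$, a chain of exponential cancellations takes place: the $\sqrt T$-coefficients telescope to zero since $x_0=x_{k+1}=0$; the $T$-coefficients match exactly between numerator and denominator because $\sum_i(t_i-t_{i-1})=1$; and the residual Gaussian product, together with the $x_1$- and $x_k$-linear endpoint factors, assembles exactly into the finite-dimensional density of a $\mathrm{BES}(3)$-bridge from $0$ to $0$ on $[0,1]$, i.e., of the normalised Brownian excursion $\ee$. Notably, the limit is dimension-independent.

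For tightness in $\mathcal{C}([0,1],\R)$, I would apply Kolmogorov's criterion via the uniform moment bound $\E[|\tilde\rho_T(t)-\tilde\rho_T(s)|^4]\leq C|t-s|^{3/2}$, derived from the same asymptotic expansion applied to two-point joint densities (after the cancellations these admit Gaussian upper bounds independent of $T$). The main obstacle is precisely the asymptotic expansion above in the non-standard regime $r\asymp\sqrt T$, $t\asymp T$: one must simultaneously control the exponential time-decay $e^{-((d-1)/2)^2 t/2}$ of the heat kernel and the exponential volume growth $(\sinh r)^{d-1}$ of the spherical area factor---individually divergent contributions that would naively blow up after rescaling---and verify that the bridge normalisation cancels them precisely, leaving only the dimension-free $\mathrm{BES}(3)$-bridge density. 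The universality of this cancellation across all dimensions $d\geq 2$ is the striking content of the theorem.
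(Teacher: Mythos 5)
The paper contains no proof of this statement: Theorem~\ref{sec:introduction-4} is Bougerol and Jeulin's result, imported as a black box (the whole point of the paper is to \emph{deduce} Theorem~\ref{sec:introduction-3} from it, together with re-rooting invariance and Gromov-hyperbolicity). So your proposal can only be measured against the original argument of \cite{bougerol_brownian_1999}, which does indeed proceed in the spirit you describe: exploit rotational invariance to reduce to the one-dimensional radial diffusion $dR=dW+\frac{d-1}{2}\coth(R)\,dt$, use explicit heat-kernel expressions to control the bridge conditioning, and identify the limit as the Bessel(3) bridge. In that sense your strategy (finite-dimensional convergence via bridge densities, plus Kolmogorov tightness) is the right one and is not a new route.

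However, as written the proposal has two concrete gaps in the step you yourself flag as the main obstacle. First, the Davies--Mandouvalos estimate you invoke is a two-sided bound with unspecified constants ($\asymp$, not $\sim$); it is enough for tightness and for reading off the correct powers of $T$ and the linear endpoint factors, but it cannot identify the \emph{limit} of the finite-dimensional densities. For that you need genuine asymptotics with constants, which come from the exact kernel formulas (Millson's recursion, Gruet-type integral representations), and this is where all the work lies. Second, for interior transitions the radial density $k_t(r_1,r_2)$ is a spherical average of $p_t(d_H(x_1,\cdot))$ over the sphere of radius $r_2$, not $p_t(|r_1-r_2|)$ times the area factor; your schematic formula asserts the outcome of this integral in the regime $r_i\asymp\sqrt{T}$, $t\asymp T$ without performing it. In particular the reflection term $-e^{-(x+y)^2/2s}$, which is indispensable if the product is to assemble into the killed-Brownian transition densities of the excursion (it does emerge, e.g.\ from the explicit $H_3$ radial kernel $\frac{e^{-t/2}}{\sqrt{2\pi t}}\frac{\sinh r_2}{\sinh r_1}\bigl(e^{-(r_2-r_1)^2/2t}-e^{-(r_1+r_2)^2/2t}\bigr)$), is invisible in your display and cannot be waved into the ``smooth prefactor'' $B_d$ without comment. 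These are gaps of execution rather than a wrong approach: the cancellation mechanism you describe is real, but the chain of cancellations is asserted rather than proved, and the passage from $\asymp$ bounds to a density limit would fail as stated.
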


\subsection{Main results}\label{sec:main-results}

Our main result, Theorem \ref{sec:introduction-3} below, gives a
geometric interpretation of Theorem \ref{sec:introduction-4}. Recall
that the Brownian continuum random tree \cite{aldous_continuum_1991-1,aldous_continuum_1993} is a random $\R$-tree coded by
the function $\ee$. More precisely, setting
$$d_\ee(s,t)=\ee_s+\ee_t-2\inf_{s\wedge t\leq u\leq s\vee t}\ee_u\,
,\qquad s,t\in [0,1]$$ defines a pseudo-distance on $[0,1]$, and we
let $(\TT_\ee=[0,1]/\{d_\ee=0\},d_\ee)$ be the quotient metric space
naturally associated with it. This space is called the Brownian
continuum random tree. We naturally distinguish the point
$o_\ee=p_\ee(0)$, where $p_\ee$ is the canonical projection, and will
usually write $\TT_\ee$ instead of $(\TT_\ee,d_\ee,o_\ee)$. This
random metric space (or more precisely its isometry class) appears as
the universal scaling limit of many tree-like random objects that
naturally appear in combinatorics and probability, see for instance
\cite{le_gall_random_2005} for a survey, and
\cite{caraceni_scaling_2014,curien_crt_2015,kortchemski_invariance_2012,panagiotou_scaling_2015,panagiotou_scaling_2014,pitman_schros_2015,rizzolo_scaling_2015,stufler_scaling_2015}
for some recent developments on the topic. Here we show that the CRT
also appears naturally in this more geometric context.

An important property of $\TT_\ee$ is the following strong re-rooting
invariance property, first noted in \cite[Proposition
4.9]{marckert_limit_2006}. For every $s,t\in [0,1]$, if we let
$s\oplus t=s+t$ if $s+t< 1$ and $s\oplus t=s+t-1$ if $s+t\geq 1$, then
for every $t\in [0,1]$
\begin{equation}
  \label{eq:3}
  (d_\ee(s\oplus t,s'\oplus t))_{s,s'\in [0,1]}\overset{(d)}{=}d_\ee\,
.
\end{equation}
This rougly says that $\TT_\ee$ pointed at $p_\ee(t)$ rather than $o_\ee$ has the same
distribution as $\TT_\ee$. 

Let $\RR_{T}=\{\bt(t):0\leq t\leq T\}\subset H$ be the range of the
Brownian loop $\bt$. We view it as a pointed metric space by
endowing it with the restriction of the hyperbolic metric
$d_H/\sqrt{T}$ renormalized by $\sqrt{T}$, and by
pointing it at $o$. As such, like $\TT_\ee$, it can be seen as a
random element of the space $\M$ of isometry classes of
pointed compact metric spaces (where two pointed metric spaces
$(X,d_X,x),(Y,d_Y,y)$ are called isometric if there exists an isometry
$\phi:X\to Y$ from $X$ onto $Y$ such that $\phi(x)=y$). This space is
equipped with the pointed Gromov-Hausdorff distance
\cite[Chapter I.5]{bridson_metric_1999}. 

\begin{thm}
  \label{sec:introduction-3}
One has the following convergence in distribution in $\M$: 
$$
\left(\RR_{T},\frac{d_H}{\sqrt{T}},o\right)\overset{(d)}{\underset{T\to\infty}{\longrightarrow}}
(\TT_\ee,d_\ee,o_\ee)\, .$$
\end{thm}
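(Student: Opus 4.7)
\ \ The plan is to show that the distortion
$$\Delta_T\ :=\ \sup_{s,t\in[0,1]}\left|\frac{d_H(\bt(Ts),\bt(Tt))}{\sqrt T}-d_\ee(s,t)\right|$$
tends to $0$ in probability: the correspondence $\{(\bt(Ts),p_\ee(s)):s\in[0,1]\}$ pairs $o$ with $o_\ee$ and has distortion at most $2\Delta_T$, so this will imply the pointed Gromov--Hausdorff convergence. Write $d_T(s,t):=d_H(\bt(Ts),\bt(Tt))/\sqrt T$ and
$$\wt d_T(s,t)\ :=\ \frac{\rho_T(Ts)+\rho_T(Tt)}{\sqrt T}-\frac{2\min_{u\in[s\wedge t,s\vee t]}\rho_T(Tu)}{\sqrt T}\,.$$
Theorem~\ref{sec:introduction-4} and the continuous mapping theorem give $\wt d_T\to d_\ee$ uniformly in probability, so the task reduces to showing $\sup_{s,t}|d_T-\wt d_T|\to 0$ in probability.

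The upper bound $d_T\leq \wt d_T+o(1)$ uniformly uses Gromov hyperbolicity. Since $H$ is $\delta$-hyperbolic for some universal $\delta$, iterating the four-point inequality $(x|y)_o\geq\min((x|z)_o,(y|z)_o)-\delta$ dyadically along a chain $z_0,\ldots,z_n$ gives $(z_0|z_n)_o\geq\min_{0\leq i<n}(z_i|z_{i+1})_o-\delta\lceil\log_2 n\rceil$. Specialising this to $z_i=\bt(T(s+i(t-s)/n))$ and using $(z_i|z_{i+1})_o\geq\min(\rho_T(Tu_i),\rho_T(Tu_{i+1}))-d_H(z_i,z_{i+1})/2$, we obtain
$$d_T(s,t)\ \leq\ \wt d_T(s,t)+\frac{2\delta\log_2 n}{\sqrt T}+\frac{\max_i d_H(z_i,z_{i+1})}{\sqrt T}\,.$$
Choosing $n=n_T$ with $\log n_T/\sqrt T\to 0$ and $n_T$ large enough for a uniform modulus-of-continuity estimate for $\bt$ (derivable from local absolute continuity of the bridge with respect to Brownian motion on $H$) to control the last term yields $d_T\leq \wt d_T+\eps_T$ uniformly, with $\eps_T\to 0$ in probability.

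The matching lower bound uses re-rooting. For each fixed $s_0\in[0,1]$, a direct computation with the heat kernel, based on Chapman--Kolmogorov and the isometry invariance $p_t(\phi a,\phi b)=p_t(a,b)$, shows that the distribution of $r\mapsto d_H(\bt(Ts_0),\bt((Ts_0+r)\bmod T))$ coincides with that of $\rho_T$: one applies the homogeneity of $H$ to move $\bt(Ts_0)$ to $o$ via the ``swap'' isometry exchanging $o$ and $\bt(Ts_0)$, and uses the Markov and reversibility properties of $\bt$. By Theorem~\ref{sec:introduction-4},
$$\left(\frac{d_H(\bt(Ts_0),\bt(T(s_0\oplus r)))}{\sqrt T}\right)_{r\in[0,1]}\xrightarrow[T\to\infty]{(d)}(\ee^*_r)_{r\in[0,1]}$$
for a normalized Brownian excursion $\ee^*$, and by \eqref{eq:3} applied to the CRT, this limit is equal in distribution to $(d_\ee(s_0,s_0\oplus r))_r$, which is the uniform limit of $(\wt d_T(s_0,s_0\oplus r))_r$. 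Since $d_T\leq\wt d_T+\eps_T$ almost surely and both sides converge in distribution to the same limit, the bounded comparison $e^{-d_T}\geq e^{-\wt d_T-\eps_T}$ and the convergence of both expectations to $\E[e^{-d_\ee(s_0,t)}]$ force the non-negative difference $\wt d_T(s_0,t)+\eps_T-d_T(s_0,t)$ to zero in probability, for each fixed $(s_0,t)$.

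The passage from pointwise to uniform convergence uses that the a.s.\ upper bound $d_T\leq\wt d_T+\eps_T$ together with the uniform continuity of $d_\ee$ make the family $(d_T)_T$ uniformly equicontinuous with high probability; an $\eta$-net argument then upgrades pointwise convergence on a countable dense subset of $[0,1]^2$ to $\Delta_T\to 0$ in probability. The main obstacle I anticipate is the uniform upper bound above, which requires a delicate balancing of the Gromov-hyperbolicity defect $\delta\log_2 n_T/\sqrt T$ against a uniform modulus-of-continuity estimate for the Brownian bridge on $H$ at scale $T/n_T$, and therefore relies on quantitative heat-kernel estimates specific to hyperbolic space.
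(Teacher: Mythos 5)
Your architecture coincides with the paper's: an a.s.\ upper bound $d_T\leq\wt d_T+o(1)$ extracted from $\delta$-hyperbolicity, identification of the laws of $d_T(s_0,\cdot)$ via re-rooting together with Theorem \ref{sec:introduction-4} and the CRT re-rooting invariance \eqref{eq:3}, the observation that a nonnegative difference whose expectation vanishes must vanish, and an equicontinuity step to upgrade to uniform convergence (the paper phrases this last step as tightness of $d_{(T)}$ in $\mathcal{C}([0,1]^2,\R)$ and identifies subsequential limits, which is equivalent to your net argument; your ``swap isometry'' computation is exactly the paper's Lemma \ref{sec:introduction-1}, where one must also check that the push-forward of $m$ under $x\mapsto\phi_x(o)$ is $m$). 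The one genuinely different ingredient is how you use hyperbolicity: you chain the four-point inequality along a mesh of $n$ points, paying $2\delta\log_2 n/\sqrt T$ plus the mesh oscillation, whereas the paper shows --- via the exponential divergence property (a path avoiding a ball $B(z,r)$ centred at a point of a geodesic between its endpoints has length at least $2^{(r-1)/\delta}$), combined with the covering-number estimate --- that w.h.p.\ the range comes within $\eta\sqrt T$ of every point of every geodesic $[\bt(Ts),\bt(Ts')]$, which gives $d_{(T)}(s,s')\leq\wt d_T(s,s')+4\delta/\sqrt T+2\eta$ directly, with no $\log n$ loss and no mesh term. Both routes are valid.

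The place where your write-up has an actual hole is the one you flag, but the obstacle is illusory: the control of $\max_i d_H(z_i,z_{i+1})/\sqrt T$ requires neither heat-kernel estimates nor absolute continuity of the bridge with respect to Brownian motion. It follows from the re-rooting lemma you already invoke for the lower bound, together with Theorem \ref{sec:introduction-4}. Indeed, for fixed $N$, the union bound and re-rooting give
$$\P\Big(\max_{0\leq i<N}\sup_{s\in[0,1/N]}d_H\big(\bt(Ti/N),\bt(T(i/N+s))\big)\geq\eta\sqrt T\Big)\leq N\,\P\Big(\sup_{s\in[0,1/N]}d_H(o,\bt(Ts))\geq\eta\sqrt T\Big)\, ,$$
and the $\limsup$ in $T$ of the right-hand side is at most $N\,\P(\sup_{[0,1/N]}\ee\geq\eta)$, which tends to $0$ as $N\to\infty$ (by comparison of $\ee$ with a Bessel$(3)$ process, as in the paper's Lemma \ref{sec:tightness-estimate-1}). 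You can then keep $n=N$ fixed, let $T\to\infty$ first (which kills the term $2\delta\log_2 N/\sqrt T$), and only afterwards let $N\to\infty$; no diagonal choice of $n_T$ and no delicate balancing is needed. With this correction your proof is complete and remains, like the paper's, free of any analysis specific to $H$ beyond Theorem \ref{sec:introduction-4}, homogeneity, and $\delta$-hyperbolicity.
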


In fact, we will show that this convergence holds jointly with that of
Theorem \ref{sec:introduction-4}, meaning that 
\begin{equation}
  \label{eq:4}
  \left(\left(\frac{\rho_{T}(Tt)}{\sqrt{T}},0\leq t\leq
    1\right),\left(\RR_{T},\frac{d_H}{\sqrt{T}},o\right)\right)\overset{(d)}{\underset{T\to\infty}{\longrightarrow}}\left(\ee,\TT_\ee\right)
\end{equation}
in distribution in the product topology of
$\mathcal{C}([0,1],\R)\times \M$.

A couple of comments on Theorem \ref{sec:introduction-3} are in order. First,
it is relatively natural to see a tree structure arise in this
context, due to the fact that hyperbolic spaces can be seen as
``fattened'' trees. On the other hand, one should not think that the
limiting tree naturally lives in the hyperbolic space $H$
itself. Indeed, due to the renormalization by $\sqrt{T}$ of the
distance $d_H$, one should rather imagine that the limiting CRT is a
random subset in some asymptotic cone of $H$. It is well-known that
$H$ does not admit an asymptotic cone in a conventional (pointed
Gromov-Hausdorff) sense, but that a substitute for this notion can be
made sense of using ultralimits. A related striking property, already present
in Theorem \ref{sec:introduction-4}, is that the renormalization does
not involve scaling constant depending on the dimension $d$ of $H$, so
indeed everything happens as if large hyperbolic Brownian bridges were
living in the asymptotic cone (in the generalized sense), which does
not depend on the dimension.

Note that such a generalized asymptotic cone is a very ramified
$\R$-tree (every point disconnects the tree into uncountably many
connected components) which is in a sense much too large to consider
random subsets on a mathematically sound basis, nevertheless, it is
consistent with the idea that the (minuscule) sub-region of this cone
that is explored by a very large loop should be a random $\R$-tree.
Finally, given Theorem \ref{sec:introduction-4}, it is very natural to
guess that this random tree should be the Brownian continuum random
tree.

In section \ref{sec:infin-brown-loop} below we will also prove a
result related to Theorem \ref{sec:introduction-3} dealing with the
infinite Brownian loop in hyperbolic space, which is the ``local
limit'' (with no rescaling involved) of $b_T$ as $T\to\infty$. This is
a random path taking values in $H$, and we will show that its range,
equipped with the rescaled hyperbolic distance $a\, d_H$ for $a>0$,
converges as $a\to 0$ to a non-compact version of the continuum random
tree, the so called self-similar CRT \cite{aldous_continuum_1991}. We
refer the reader to section \ref{sec:infin-brown-loop} for precise
statements and continue our discussion of Theorem
\ref{sec:introduction-3}.

\subsection{Motivation, methods and open questions}\label{sec:methods-motivation}

We will show that Theorem
\ref{sec:introduction-3} is a relatively elementary consequence of
\begin{itemize}
\item 
Theorem \ref{sec:introduction-4},
\item
the hyperbolicity of $H$ in the sense of Gromov 
\item
a natural ``re-rooting'' invariance of Brownian loops under cyclic shifts. 
\end{itemize}
The use of functional limit theorems as Theorem
\ref{sec:introduction-4} and of re-rooting invariance properties are
powerful tools in the study of random metric spaces, as exemplified by
their use in the context of random maps. Our proofs borrow ideas of
\cite{le_gall_topological_2007,le_gall_uniqueness_2013} in
particular. 

To illustrate the robustness of the method, we will avoid as much as
possible the use of specific properties of the hyperbolic spaces $H$,
besides the fact that they satisfy the above three properties. In the rest
of the paper, we will denote by $\delta$ a constant such that $H$ is
$\delta$-hyperbolic \cite{bridson_metric_1999}.  For instance,
Bougerol and Jeulin \cite{bougerol_brownian_1999} proved Theorem
\ref{sec:introduction-4} in the more general setting of non-compact
rank 1 symmetric spaces instead of the hyperbolic space, and our proof
applies almost {\it verbatim} to this situation, replacing hyperbolic
isometries used in the re-rooting Lemma \ref{sec:introduction-1} below
by a consistent choice of isometries of the symmetric space.

In a slightly different direction, Bougerol and Jeulin
\cite{bougerol_brownian_1999} also 
proved  (and also using explicit representations of the probability
densities) that the simple random walk $(S_n,n\geq 0)$ on a $k\geq
3$-regular tree $\T_k$ and conditioned to return to the origin $o$
converges after rescaling to the Brownian excursion: if $d_{\T_k}$
denotes distance in the tree, then
\begin{equation}
  \label{eq:5}
  \left(\frac{d_{\T_k}(o,S_{\lfloor 2nt\rfloor})}{\sqrt{2n}}\right)
\quad \mbox{ given }\quad \{S_0=S_{2n}=o\}
\overset{(d)}{\underset{n\to\infty}{\longrightarrow}} \ee 
\end{equation}
in
distribution in the Skorokhod space $\mathcal{D}([0,1],\R)$ of càdlàg
functions (the use of the Skorokhod space could be avoided by taking
a continuous interpolation of the distance process above between
integer times). Our methods allow to obtain that the range
$$\left(\{S_i,0\leq i\leq 2n\},\frac{d_{\T_k}}{\sqrt{2n}},o\right)\quad  \mbox{ given
}\quad \{S_0=S_{2n}=o\}
\overset{(d)}{\underset{n\to\infty}{\longrightarrow}} \TT_\ee$$ in
distribution in $\M$. This situation is in fact simpler since in this
case the hyperbolic constant is $\delta=0$ (so that we are already
dealing with a tree metric). One should only adapt the re-rooting
invariance Lemma \ref{sec:introduction-1} below by replacing the
isometries of $H$ with those of $\T_k$, making use of the fact that it
is a transitive graph. We leave details to the reader.

As we were finishing this work, we became aware of the very recent PhD
thesis of Andrew Stewart \cite{stewart_range_2016}, who provides
another proof of the result we just mentioned on the range of random
walks on $\T_k$. Stewart's methods, based on the self-similar
structure of the continuum random tree, are independent of ours and do
not rely on Bougerol and Jeulin's result. It is indeed stressed 
in Appendix B of \cite{stewart_range_2016} that Bougerol and Jeulin's result can
be used to obtain the convergence of the range, but the sketch of
proof presented there seems quite different from our approach. There is
also some overlap between conjectures made in
\cite{stewart_range_2016} and some of the comments below.

Note that the recent work by Aïdékon and de Raphélis \cite{aidekon_scaling_2015},
proving convergence of the range of a null-recurrent biased walk on a
infinite supercritical Galton-Watson tree to a Brownian forest, is in
a similar spirit to the above discussion, but where the underlying
(random) space is only supposed to be ``statistically
homogeneous''. It would be interesting to see if the methods of
\cite{aidekon_scaling_2015} can be used to extend \eqref{eq:5} with $\T_k$ replaced by
a supercritical Galton-Watson tree. In a slightly different context,
but in a very similar spirit, we also mention the work of Duquesne \cite{duquesne_continuum_2005} on
the range of barely transient random walks on regular trees. 

In fact, we expect Theorem \ref{sec:introduction-3} to hold in a much
wider context, and that the emergence of the Brownian continuum random
tree as a limit of large Brownian loops is a signature of non-compact,
negatively curved spaces that are ``close to homogeneous''. The
intuition behind this result comes from the recent advances
\cite{gouezel_random_2013,gouezel_local_2014} on local limit theorems
for transition probabilities in hyperbolic groups. Namely, Gouëzel's
results in \cite{gouezel_local_2014} imply in particular that if $G$
is a nonelementary Gromov-hyperbolic group, and if $S$ is a finite
symmetric subset of generators of $G$, then the number $C_n$ of closed
paths of length $n$ in the Cayley graph of $G$ associated with $S$ is
asymptotically $$C_n\sim \alpha\, \beta^n\, n^{-3/2}$$ (modulo the
usual periodicity caveat) for some $\alpha=\alpha(G,S)\in (0,\infty)$
and $\beta=\beta(G,S)\in (1,\infty)$. Note that, contrary to
$\alpha,\beta$ which depend on $G,S$ the exponent $-3/2$ is
universal. In enumerative combinatorics, this kind of asymptotics is a
distinctive signature of tree structures
\cite{drmota_random_2009}. This is a first hint that a walk in $G$
conditioned to come back at its starting point after $n$ steps might
approximate a tree in some sense. In fact, this general idea is
present in the approach of \cite{gouezel_local_2014}.

However, besides these rough ideas, it is a challenge to prove a
result such as Theorem \ref{sec:introduction-4} (or the weaker Theorem
\ref{sec:introduction-3}) in contexts where strong analytical or
combinatorial tools, such as those used in
\cite{bougerol_brownian_1999}, are not available.

The proof of Theorem \ref{sec:introduction-3} will be shown in Section
\ref{sec:convergence} below, after two preliminary sections
respectively on the re-rooting invariance and on a key tightness
estimate. Finally, Section
\ref{sec:infin-brown-loop} is dedicated to study of the renormalized
infinite Brownian loop.

\bigskip

\noindent{\bf Acknowledgements: }Thanks are due to Sébastien Gouëzel
for motivating discussions, and to \'Etienne Ghys for encouragements. 

\section{Invariance under re-rooting}\label{sec:invariance-under-re}

If $\phi:H\to H$ is an isometry, then $p_t(\phi(x),\phi(y))=p_t(x,y)$,
as follows from invariance properties of the heat kernel on hyperbolic spaces
(\cite{anker-ostellari,grigoryan-noguchi}).  From there, a natural property of invariance under
cyclic shifts holds.  For $x\in H, x\neq o$, we let
$\phi_x:H\to H$ be the unique hyperbolic isometry sending $x$
to $o$, and let $\phi_o$ be the identity map.

\begin{lmm}
  \label{sec:introduction-1}
Fix $T>0$ and $t\in [0,T]$. Then the processes 
$$\bt\, ,\qquad  
\phi_{\bt(t)}(\bt(\cdot+t\! \mod T))\, ,$$ have same
distribution. Here, by convention, we let $s+t\!\mod T$ be the unique
representative in $[0,T)$. 
\end{lmm}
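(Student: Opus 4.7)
The two processes are continuous in $s \in [0, T]$ (the potential discontinuity at $s = T - t$ is absorbed by the bridge condition $\bt(0) = \bt(T) = o$), so it suffices to prove equality of finite-dimensional marginals. Fix $k \geq 1$, times $0 < s_1 < \ldots < s_k < T$ avoiding the Lebesgue-null locus where some $s_i + t = T$, and test points $y_1, \ldots, y_k \in H$. Let $j \in \{0, \ldots, k\}$ be the number of indices with $s_i + t < T$, and set $u_i := s_i + t \!\mod T$, so that
\[
0 < u_{j+1} < \ldots < u_k < t < u_1 < \ldots < u_j < T.
\]
Because each $\phi_x$ is an isometry of $H$ and hence preserves the volume measure $m$, the joint density at $(y_1, \ldots, y_k)$ of the shifted process is obtained by marginalising $\bt(t) = x$:
\[
\int_H m(dx)\,\pi\!\bigl(x, \phi_x^{-1}(y_1), \ldots, \phi_x^{-1}(y_k)\bigr),
\]
where $\pi(x, x_1, \ldots, x_k)$ is the joint density of $(\bt(t), \bt(u_1), \ldots, \bt(u_k))$ supplied by the Brownian-bridge formula of Section~\ref{sec:brown-bridg-hyperb}.

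The core computation is a simplification of the integrand using $p_a(\phi(u), \phi(v)) = p_a(u, v)$ for $\phi = \phi_x$, together with $\phi_x(x) = o$. Every transition density between two consecutive observation times lying in a common ``arc'' (both before $t$ or both after $t$) collapses to $p_{s_i - s_{i-1}}(y_{i-1}, y_i)$. The two factors straddling the cut at $t$ reduce to $p_{s_1}(o, y_1)$ and $p_{T - s_k}(y_k, o)$ after invoking $\phi_x(x) = o$. The only residual $x$-dependence lives in the two factors tied to the bridge endpoints:
\[
p_{u_{j+1}}(o, \phi_x^{-1}(y_{j+1})) = p_{u_{j+1}}(\phi_x(o), y_{j+1}), \qquad p_{T - u_j}(\phi_x^{-1}(y_j), o) = p_{T - u_j}(y_j, \phi_x(o)),
\]
with obvious modifications when $j = 0$ or $j = k$.

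To conclude, I perform the change of variables $z = \phi_x(o)$ in the remaining integral over $x \in H$. The convention that $\phi_x$ is the hyperbolic translation along the geodesic from $x$ to $o$ makes $z$ the geodesic antipode of $x$ through $o$, so $x \mapsto z$ is the geodesic symmetry at $o$, which is an isometry of $H$ and therefore $m$-preserving. The Chapman--Kolmogorov identity and the symmetry of $p$ then yield
\[
\int_H m(dz)\, p_{u_{j+1}}(z, y_{j+1})\, p_{T - u_j}(y_j, z) = p_{T - u_j + u_{j+1}}(y_j, y_{j+1}) = p_{s_{j+1} - s_j}(y_j, y_{j+1}),
\]
and assembling all the factors reproduces exactly the finite-dimensional density of $(\bt(s_1), \ldots, \bt(s_k))$. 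The main obstacle is really just the bookkeeping, tracking wrapped versus unwrapped indices and the ordering around the cut at time $t$. The single genuinely nontrivial input beyond that is the $m$-preservation of $x \mapsto \phi_x(o)$: this is where the specific choice of ``hyperbolic'' isometry matters, and is precisely why, as already noted in the introduction, the extension to other rank-one symmetric spaces requires a \emph{consistent} choice of isometries rather than an ad hoc one.
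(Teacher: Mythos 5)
Your proof is correct and follows essentially the same route as the paper's: write the finite-dimensional bridge density, use isometry-invariance of $p_t$ to push everything through $\phi_x$, observe that $x\mapsto\phi_x(o)$ is $m$-preserving (the paper identifies it as $x\mapsto -x$ in the ball model, i.e.\ the geodesic symmetry at $o$, exactly as you do), and close the loop with Chapman--Kolmogorov. The only difference is one of completeness: the paper writes out the two-point marginal in a single case and leaves the rest as an exercise, whereas you give the general $k$-point bookkeeping.
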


\begin{proof}
For convenience, let $X_s:=\phi_{b_T(t)}(b_T(s+t \!\mod T))$ for
$s\in[0,T]$. Let $s,r\in (0,T)$ with $s<r$ and $F: H^2\rightarrow\mathbb{R}_+$
be a measurable function. We will show that
 \[
 \E\left(F(X_s,X_r)\right)=\E\left(F(b_T(s),b_T(r))\right)
\]
We prove it in the case where $s<T-t<r<T$, the situations where
$s<r<T-t$ and $T-t<s<r$ are easier and left to the reader.  Observe
that $0<t+r-T<t<t+s<T$, so by the finite-dimensional distribution of
$b_T$, we have
\begin{align*}
 &\E\left(F(X_s,X_r)\right)=\E\left[F\Big(\phi_{b_T(t)}(b_T(s+t)), \phi_{b_T(t)}(b_T(t+r-T))\Big)\right]\\
 =&\int_H m(\d x_1)\int_H m(\d x_2)\int_H m(\d x_3)\\
&\qquad \times \frac{p_{t+r-T}(o, x_1)p_{T-r}(x_1,x_2)p_{s}(x_2,x_3)p_{T-t-s}(x_3,o)}{p_T(o,o)}F(\phi_{x_2}(x_3),\phi_{x_2}(x_1))
\end{align*}
Since $p_t$ is invariant by the isometry $\phi_{x_2}$, we deduce that
\begin{align*}
&\E\left(F(X_s,X_r)\right)=\int_H m(\d x_2)\int_{H^2}m(\d x_1)m(\d x_3)\\
&\times \frac{p_{t+r-T}(\phi_{x_2}(o), \phi_{x_2}(x_1))p_{T-r}(\phi_{x_2}(x_1),o)p_{s}(o,\phi_{x_2}(x_3))p_{T-t-s}(\phi_{x_2}(x_3), \phi_{x_2}(o))}{p_T(o,o)}F(\phi_{x_2}(x_3),\phi_{x_2}(x_1))
\end{align*}
Let us write $y_1=\phi_{x_2}(x_3)$,$y_2=\phi_{x_2}(x_1)$, and
$x_2'=\phi_{x_2}(o)$. Note that in the Poincaré ball model of $H$ with
origin $o=0\in \R^d$, $x'_2$ is simply the point $-x_2$, so that
clearly $\int_H f(x'_2)m(\d x_2)=\int_H f(x_2)m(\d x_2)$ for every
non-negative measurable $f$.  It follows that
\begin{align*}
&\E\left(F(X_s,X_r)\right)\\
=&\int_H m(\d x_2)\int_{H^2}m(\d y_1)m(\d y_2)\frac{p_{t+r-T}(x'_2, y_2)p_{T-r}(y_2,o)p_s(o,y_1)p_{T-t-s}(y_1,x'_2)}{p_T(o,o)}F(y_1,y_2)\\
=&\int_{H^2}m(\d y_1)m(\d
y_2)\frac{p_s(o,y_1)p_{T-r}(y_2,o)}{p_T(o,o)}F(y_1,y_2)\int_H m(\d
x_2) p_{T-t-s}(y_1,x'_2)p_{t+r-T}(x'_2, y_2)\\
&\int_{H^2}m(\d y_1)m(\d y_2)\frac{p_s(o,y_1)p_{T-r}(y_2,o)}{p_T(o,o)}F(y_1,y_2)\int_H m(\d x_2) p_{T-t-s}(y_1,x_2)p_{t+r-T}(x_2, y_2)\\
=&\int_{H^2}m(\d y_1)m(\d y_2)\frac{p_s(o,y_1)p_{T-r}(y_2,o)}{p_T(o,o)}F(y_1,y_2) p_{r-s}(y_1,y_2)\\
=&\E\left(F(b_T(s),b_T(r))\right)
\end{align*}
The proof of the equality of all finite-dimensional marginals is 
similar to this case, with longer formulas, and we leave it as an
exercise to the reader. 
This concludes Lemma \ref{sec:introduction-1} because of the continuity of $b$.
\end{proof}

Note that it does not really matter which isometry sending $\bt(t)$ to
$o$ we choose (we could also have chosen the unique parabolic isometry
sending $x$ to $o$) but of course one should perform this choice in a
consistent way in such a way that the image of $m$ under $x\mapsto
\phi_x(o)$ is $m$ (note that this image is clearly invariant under the
action of isometries of $H$ so is a constant multiple $c\, m$ of $m$,
and noting that $x\mapsto \phi_x(o)$ is a measurable involution from
$H$ onto $H$, this entails that $c=1$.)

\section{Tightness estimate}\label{sec:tightness-estimate}

For $T,\eta>0$, we let $\NN(T,\eta)$ be the minimal number of balls of
radius $\eta$ (with respect to the metric $d_H/\sqrt{T}$) necessary to
cover the range $\RR_{T}$: 
$$\NN(T,\eta)=\inf\left\{N\geq 1:\exists x_1,\ldots,x_N\in
  H,\RR_{T}\subset\bigcup_{k=1}^NB_{d_H}(x_k,\eta\sqrt{T})\right\}\, .$$

\begin{lmm}
  \label{sec:tightness-estimate-1}
It holds that for every $N\geq 2$ and $\eta>0$, 
$$\limsup_{T\to\infty}\,
\P\left(\RR_{T}\not\subset\bigcup_{i=0}^{N-1}B_{d_H}(\bt(Ti/N),\eta\sqrt{T})\right)\leq
\frac{12}{\eta} \sqrt{\frac{N}{\pi}}\, e^{-\eta^2 (N-1)/18}\, ,$$
and in particular one has
$\lim_{N\to\infty}\limsup_{T\to\infty}\P(\NN(T,\eta)>N)=0$. 
\end{lmm}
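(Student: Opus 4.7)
The strategy combines a union bound, the re-rooting invariance of Lemma \ref{sec:introduction-1}, and the Bougerol--Jeulin limit theorem (Theorem \ref{sec:introduction-4}) to reduce the estimate to a tail bound for the Brownian excursion on a short interval.

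First I would write, by a union bound,
$$\P\!\left(\RR_{T} \not\subset \bigcup_{i=0}^{N-1} B_{d_H}(\bt(Ti/N),\eta\sqrt{T})\right) \leq \sum_{i=0}^{N-1} \P\!\left(\sup_{t \in [Ti/N,\, T(i+1)/N]} d_H(\bt(t), \bt(Ti/N)) > \eta\sqrt{T}\right).$$
Applying Lemma \ref{sec:introduction-1} with parameter $Ti/N$ and using that $\phi_{\bt(Ti/N)}$ is an isometry sending $\bt(Ti/N)$ to $o$, each term on the right-hand side equals $\P(\sup_{s \in [0, T/N]} \rho_{T}(s) > \eta\sqrt{T})$, independently of $i$. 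Hence the total is $N$ times this single-piece probability, which after the rescaling $t = s/T$ reads $N \cdot \P(\sup_{t \in [0, 1/N]} \rho_{T}(Tt)/\sqrt{T} > \eta)$.

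Next I would invoke Theorem \ref{sec:introduction-4}: since the functional $f \mapsto \sup_{[0, 1/N]} f$ is continuous on $\mathcal{C}([0,1], \R)$, the Portmanteau lemma gives
$$\limsup_{T \to \infty} \P\!\left(\sup_{t \in [0, 1/N]} \rho_{T}(Tt)/\sqrt{T} > \eta\right) \leq \P\!\left(\sup_{t \in [0, 1/N]} \ee_t \geq \eta\right).$$
The claim then reduces to the pure-excursion bound
$$\P\!\left(\sup_{t \in [0, 1/N]} \ee_t \geq \eta\right) \leq \frac{12}{\eta N}\sqrt{\frac{N}{\pi}}\, e^{-\eta^{2}(N-1)/18},$$
which will be the main technical step.

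To establish this excursion bound I would use the Markov property of $\ee$ at time $1/N$, splitting on the value of $\ee_{1/N}$. The marginal density of $\ee_{1/N}$ has the explicit form $f_{1/N}(x) \propto x^{2}\exp(-x^{2} N^{2}/(2(N-1)))$, yielding a sharp Gaussian tail whenever $\ee_{1/N}$ is of order $\eta$; and conditionally on $\ee_{1/N}$ being small, the time-reversal of $\ee$ on $[0, 1/N]$ is dominated by a Bessel$(3)$-type bridge from $0$ to a small endpoint, whose maximum admits a standard exponential tail. The rather loose constant $1/18$ in the exponent leaves room for a crude three-point splitting (e.g.\ deciding whether the maximum is achieved with $\ee_{1/N}$ above or below $\eta/3$), which is where I expect the main computational friction; the prefactor $12$ absorbs the resulting multiplicative constants. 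Finally, for the last assertion, the existence of the covering by $N$ balls forces $\NN(T, \eta) \leq N$, so $\P(\NN(T, \eta) > N)$ is bounded by the same right-hand side, which tends to $0$ as $N \to \infty$ because exponential decay beats the $\sqrt{N}$ prefactor.
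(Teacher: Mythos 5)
Your reduction is exactly the paper's: union bound over the $N$ sub-intervals, the re-rooting Lemma \ref{sec:introduction-1} to identify each term with $\P(\sup_{[0,T/N]}\rho_T>\eta\sqrt T)$, Portmanteau via Theorem \ref{sec:introduction-4} to pass to $N\,\P(\sup_{[0,1/N]}\ee\geq\eta)$, and the same observation for the covering-number consequence. All of that is correct.

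Where you diverge is the one genuinely technical step, the tail bound for $\sup_{[0,1/N]}\ee$, and there your argument is a plan rather than a proof: you invoke the Markov property at time $1/N$, the explicit density of $\ee_{1/N}$, and a Bessel$(3)$-bridge maximum estimate, but you do not carry out the computation and simply assert that the constants $12$, $\sqrt{N/\pi}$ and $1/18$ will ``absorb'' the friction. That route can almost certainly be made to work (the conditional law of $(\ee_s)_{s\leq 1/N}$ given $\ee_{1/N}=x$ is indeed a Bessel$(3)$ bridge, and its maximum has a Gaussian-type tail), but as written the key inequality is unproved. The paper gets the explicit constants much more cheaply: by the identity $\ee\overset{(d)}{=}((1-s)X_{s/(1-s)})_{0\leq s\leq 1}$ with $X$ a Bessel$(3)$ process started at $0$, one has the pathwise domination $\sup_{[0,1/N]}\ee\preceq\sup_{[0,1/(N-1)]}X$; writing $X$ as the norm of a three-dimensional Brownian motion and bounding coordinatewise reduces everything to the reflection principle for one-dimensional Brownian motion, which yields the stated prefactor and the exponent $\eta^2(N-1)/18$ directly. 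If you want to keep your conditioning approach, you must actually perform the two-case estimate (endpoint large versus bridge overshooting) and verify the constants; otherwise substitute the Bessel representation, which closes the gap in three lines. Note also that for the application in the paper only the qualitative statement $\lim_N\limsup_T\P(\NN(T,\eta)>N)=0$ is used, so any bound summable in $N$ would do, but the lemma as stated claims the explicit constants and your proof must deliver them.
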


\begin{proof}
By the union bound and the
  re-rooting lemma \ref{sec:introduction-1}, 
  \begin{align*}
 \lefteqn{\P\left(\RR_{T}\not\subset\bigcup_{i=0}^{N-1}B_{d_H}(\bt(Ti/N),\eta\sqrt{T})\right)}\\
&\leq
\sum_{i=0}^{N-1}\P\left(\sup\left\{d_H(\bt(iT/N),\bt((s+i/N)T)):s\in[0,1/N]
\right\}\geq \eta\sqrt{T}\right)\\
&=N\, \P\left(\sup\left\{d_H(o,\bt(Ts)):s\in[0,1/N]
\right\}\geq \eta\sqrt{T}\right). 
  \end{align*}
Theorem \ref{sec:introduction-4} implies that 
$$\limsup_{T\to\infty}\P\left(\sup\left\{d_H(o,\bt(Ts)):s\in[0,1/N]
  \right\}\geq \eta\sqrt{T}\right)\leq \P(\sup_{[0,1/N]}\ee>\eta)\,
.$$ To bound this probability, one can use for instance the fact (see
Theorem XII.4.2 and Exercise XI.3.6 in \cite{revuz_continuous_1991})
that $((1-s)X_{s/(1-s)},0\leq s\leq 1)$ has same distribution as $\ee$
if $X$ is a $3$-dimensional Bessel process. This shows that
$\sup_{[0,1/N]}\ee$ is stochastically dominated by
$\sup_{[0,1/(N-1)]}X$. Then one can use the fact that $X$ has same
distribution as the Euclidean norm of a standard $3$-dimensional
Brownian motion. Using this, we easily get
$$\P\left(\sup_{[0,1/(N-1)]}X>\eta\right)\leq 6\,
\P\left(\sup \{W_s:0\leq s\leq 1/(N-1)\}>\eta/3\right)\, ,$$ where $(W_t,t\geq
0)$ is a standard Brownian motion in $\R$. Using the fact that $\sup
\{W_s:0\leq s\leq t\}$ has same distribution as $|W_t|$ and the
estimate $\P(|W_1|\geq x)\leq 2\exp(-x^2/2)/x\sqrt{2\pi}$, we get the
wanted bound.  We
conclude since clearly $\NN(T,\eta)>N$ implies that
$\RR_{T}\not\subset\bigcup_{i=0}^{N-1}B_{d_H}(\bt(Ti/N),\eta\sqrt{T})$.
\end{proof}

\bigskip

A crucial corollary of the tightness estimate (and hyperbolicity) is
the fact that the range $\RR_{T}$ cannot avoid large portions of
geodesics between the points it visits. For $x,y\in H$, let $[x,y]$ be
the (hyperbolic) geodesic segment between $x$ and $y$. For $0\leq
s\leq t\leq T$ we let $\RR_{T}(s,t)=\{\bt(u):s\leq u\leq t\}$, and
for $r>0$ we define the event
$$\Lambda_{T}(r)=\left\{\exists\, s\leq t\in [0,T]:\sup_{y\in
    [\bt(s),\bt(t)]}d_H(y,\RR_{T}(s,t))\geq
  r\right\}\, .$$

\begin{lmm}
  \label{sec:tightness-estimate-2}
For every $\eta>0$, one has
$\P(\Lambda_{T}(\eta\sqrt{T}))\to 0$ as $T\to\infty$. 
\end{lmm}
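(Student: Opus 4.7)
The plan is to combine Lemma~\ref{sec:tightness-estimate-1} with the following standard consequence of $\delta$-hyperbolicity, the \emph{slim polygon} lemma: in a $\delta$-hyperbolic space, every side of a geodesic $n$-gon lies in the $C\delta\log_2 n$-neighbourhood of the union of the other sides. This follows by induction on $n$ from the slim triangle property (decomposing an $n$-gon into two polygons by a diagonal).

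Fix $\eta>0$ and set $\eta'=\eta/4$. Given $\varepsilon>0$, I would invoke the proof of Lemma~\ref{sec:tightness-estimate-1}---whose union bound in fact controls the oscillation of $\bt$ on each subinterval of length $T/N$ \emph{separately}---to choose $N=N(\eta',\varepsilon)$ such that, for $T$ sufficiently large, the event
$$
E_{T,N}=\left\{\max_{0\leq i<N}\sup_{u\in[iT/N,(i+1)T/N]}d_H(\bt(iT/N),\bt(u))\leq \eta'\sqrt{T}\right\}
$$
has probability at least $1-\varepsilon$.

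The main step is to argue that on $E_{T,N}$ no pair $s\leq t$ in $[0,T]$ can witness $\Lambda_{T}(\eta\sqrt{T})$. Let $i\leq j$ be the indices with $iT/N\leq s<(i+1)T/N$ and $jT/N\leq t<(j+1)T/N$. If $i=j$, then $[\bt(s),\bt(t)]$ has length at most $2\eta'\sqrt{T}$ and each of its points is within $\eta'\sqrt{T}$ of $\bt(s)\in\RR_{T}(s,t)$. If $i<j$, I would form the geodesic polygon with consecutive vertices $\bt(s),\bt((i+1)T/N),\ldots,\bt(jT/N),\bt(t)$: by $E_{T,N}$ each non-closing side has length $\leq 2\eta'\sqrt{T}$, all vertices belong to $\RR_{T}(s,t)$, and the polygon has at most $N+1$ sides. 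The slim polygon lemma, applied to the closing side $[\bt(s),\bt(t)]$, then yields that every one of its points lies within $C\delta\log_2(N+1)$ of some non-closing side, hence within $C\delta\log_2(N+1)+\eta'\sqrt{T}$ of a vertex, hence of $\RR_{T}(s,t)$.

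Since $N$ and $\delta$ are fixed while $\sqrt{T}\to\infty$, this bound is strictly less than $\eta\sqrt{T}$ for $T$ large enough, uniformly in $s\leq t$; hence $\P(\Lambda_{T}(\eta\sqrt{T}))\leq\varepsilon$ for all such $T$, and letting $\varepsilon\downarrow 0$ gives the result. The only potentially delicate point is the slim polygon estimate, which is classical; I do not expect any serious obstacle beyond invoking it correctly.
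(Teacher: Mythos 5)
Your proof is correct, but it uses hyperbolicity in a genuinely different way from the paper. The paper's argument is by contradiction via exponential divergence: it invokes Proposition III.1.6 of \cite{bridson_metric_1999}, which says that a continuous path avoiding a ball $B(z,r)$ centred at a point $z$ of a geodesic joining its endpoints must have length at least $2^{(r-1)/\delta}$, hence (after doubling the radius) cannot be covered by fewer than $2^{(r-1)/\delta}/(2r)$ balls of radius $r$; on $\Lambda_T(2\eta\sqrt{T})$ this forces $\NN(T,\eta)$ to exceed a quantity diverging with $T$, contradicting the tightness of $\NN(T,\eta)$ given by Lemma \ref{sec:tightness-estimate-1} exactly as stated. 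You instead use the slim $n$-gon lemma (each side of a geodesic $n$-gon lies in the $O(\delta\log_2 n)$-neighbourhood of the others, by dyadic induction from slim triangles), applied to the polygon whose vertices are the sampled points $\bt(kT/N)$ lying in $[s,t]$ together with $\bt(s),\bt(t)$; this needs the slightly stronger (but freely available) form of Lemma \ref{sec:tightness-estimate-1} controlling the oscillation on each subinterval separately, which is precisely what its union bound establishes before being weakened to a covering statement. All the details check out: the non-closing sides have length at most $2\eta'\sqrt{T}$ and endpoints in $\RR_T(s,t)$, the polygon has at most $N+1$ sides, and $C\delta\log_2(N+1)+\eta'\sqrt{T}<\eta\sqrt{T}$ for $T$ large since $N$ is fixed. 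Your route gives a direct, quantitative bound $\sup_y d_H(y,\RR_T(s,t))\le C\delta\log_2(N+1)+\eta\sqrt{T}/4$ on the good event rather than a proof by contradiction, at the cost of importing the (classical but not stated in the paper) slim polygon estimate; the paper's route is shorter given that Lemma \ref{sec:tightness-estimate-1} is already phrased in terms of covering numbers. Either is perfectly acceptable.
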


\begin{proof}
  A standard property of $\delta$-Gromov-hyperbolic spaces (see
  Proposition III.1.6 in \cite{bridson_metric_1999}) is that if $c$ is a continuous
  path that avoids a ball $B(z,r)$ around some vertex $z$ on a
  geodesic between the endpoints of $c$, then $c$ must be of length at
  least $2^{(r-1)/\delta}$. This implies the property that if moreover
  we assume that $c$ avoids the larger ball $B(z,2r)$, then its image
  cannot be covered by less than
  $2^{(r-1)/\delta}/(2r)$ balls of radius $r$: otherwise, by
  possibly modifying the path $c$ by a piecewise geodesic path inside
  each ball of a cover of the image of $c$ by balls of radius $r$, we
  would find a path that avoids $B(z,r)$ but is of length at most
  $2r\times 2^{(r-1)/\delta}/(2r)$, a contradiction.

  On the event $\Lambda_{T}(2\eta\sqrt{T})$, there exist $s<t$ in
  $[0,1]$ and $y\in [\bt(Ts),\bt(Tt)]$ such that
  $d_H(y,\RR_{T}(s,t))\geq 2\eta\sqrt{T}$, meaning that the portion
  of the path of $\bt$ between times $s$ and $t$ avoids
  $B_{d_H}(y,2\eta\sqrt{T})$. By the above discussion, this implies
  that
$$\NN(T,\eta)>\frac{2^{(\eta\sqrt{T}-1)/\delta}}{2\eta\sqrt{T}}\,
.$$
Since the latter lower bound diverges for any $\eta>0$ as $T\to\infty$,
we conclude immediately from Lemma  \ref{sec:tightness-estimate-1}. 
\end{proof}

We now define a continuous random function $d_{(T)}$ on $[0,1]^2$ by the
formula
$$d_{(T)}(s,t)=\frac{d_H(b_T(Ts),b_T(Tt))}{\sqrt{T}}\, ,\qquad
0\leq s,t\leq 1\, .$$

\begin{lmm}
  \label{sec:tightness-estimate-3}
The family of laws of $d_{(T)}$, for $T\geq 1$, is relatively compact for
the weak topology on probability measures on
$\mathcal{C}([0,1]^2,\R)$. 
\end{lmm}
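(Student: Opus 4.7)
The plan is to verify the standard tightness criterion on the Polish space $\mathcal{C}([0,1]^2,\R)$. Since $d_{(T)}(0,0)=0$ deterministically, tightness at a single point is automatic, and by Arzel\`a--Ascoli it suffices to show that for every $\eta>0$,
$$\lim_{N\to\infty}\limsup_{T\to\infty}\P\!\left(\omega(d_{(T)},1/N)>\eta\right)=0,$$
where $\omega(f,\delta)=\sup\{|f(z)-f(z')|:z,z'\in [0,1]^2,\,|z-z'|_\infty\le\delta\}$.

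The first step is to reduce this bivariate modulus of continuity to a one-parameter near-diagonal supremum. The triangle inequality for $d_H$ gives
$$|d_{(T)}(s,t)-d_{(T)}(s',t')|\le d_{(T)}(s,s')+d_{(T)}(t,t'),$$
hence $\omega(d_{(T)},1/N)\le 2D_N^{(T)}$ with
$$D_N^{(T)}:=\sup\{d_{(T)}(s,s'):s,s'\in[0,1],\,|s-s'|\le 1/N\}.$$
Covering $\{(s,s')\in[0,1]^2:|s-s'|\le 1/N\}$ by the $N-1$ squares $[i/N,(i+2)/N]^2$ for $i=0,\ldots,N-2$, and applying the triangle inequality $d_{(T)}(s,s')\le d_{(T)}(s,i/N)+d_{(T)}(i/N,s')$ inside each square, yields
$$D_N^{(T)}\le 2\max_{0\le i\le N-2}\;\sup_{s\in[i/N,(i+2)/N]}d_{(T)}(i/N,s).$$

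The second step invokes the re-rooting Lemma \ref{sec:introduction-1}: because $\phi_{\bt(t)}$ is an isometry, cyclically shifting the time parameter preserves the joint distribution of $(d_H(\bt(s),\bt(s')))_{s,s'}$, so for each $i$,
$$\sup_{s\in[i/N,(i+2)/N]}d_H(\bt(iT/N),\bt(Ts))\;\overset{(d)}{=}\;\sup_{s\in[0,2/N]}d_H(o,\bt(Ts)).$$
A union bound then gives
$$\P\!\left(D_N^{(T)}>2\eta\right)\le (N-1)\,\P\!\left(\sup_{s\in[0,2/N]}\frac{d_H(o,\bt(Ts))}{\sqrt{T}}>\eta\right),$$
and the right-hand side is controlled as in the proof of Lemma \ref{sec:tightness-estimate-1}: Theorem \ref{sec:introduction-4} gives the $\limsup$ bound $(N-1)\,\P(\sup_{[0,2/N]}\ee>\eta)$, and the Brownian-excursion tail estimate employed there yields a quantity of order $\eta^{-1}\sqrt{N}\exp(-c\eta^2 N)$, which vanishes as $N\to\infty$ for each fixed $\eta>0$. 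This proves the modulus-of-continuity criterion and hence the desired tightness.

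The argument is essentially a ``diagonal'' variant of the proof of Lemma \ref{sec:tightness-estimate-1}, and the inputs are the same: re-rooting invariance, Bougerol--Jeulin, and the explicit Gaussian tail for the Brownian excursion. The only genuinely new step, and thus the main (though mild) obstacle, is the passage from the bivariate modulus $\omega(d_{(T)},\cdot)$ to a univariate near-diagonal supremum; this is handled by the triangle inequality and the covering argument above.
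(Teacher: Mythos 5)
Your proof is correct, but it takes a genuinely different route from the paper's for the key estimate. After the common first step (the triangle inequality reducing the bivariate modulus $\omega(d_{(T)},\cdot)$ to the near-diagonal supremum $\sup_{|s-s'|\le 1/N}d_{(T)}(s,s')$, which is also how the paper begins), the paper controls this supremum via $\delta$-hyperbolicity: it uses the four-point inequality $2d_H(a,[b,c])+d_H(b,c)\le d_H(a,b)+d_H(a,c)+4\delta$ together with Lemma \ref{sec:tightness-estimate-2} to show that, with high probability, $d_{(T)}(s,s')\le \rho_{(T)}(s)+\rho_{(T)}(s')-2\inf_{[s,s']}\rho_{(T)}+4\delta/\sqrt{T}+2\eta$, and then bounds everything by the modulus of continuity of $\rho_{(T)}$, which converges by Bougerol--Jeulin. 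You instead bypass hyperbolicity entirely: you cover the diagonal band by the squares $[i/N,(i+2)/N]^2$, use re-rooting at the mesh points $t=iT/N$ to identify each local supremum in law with $\sup_{[0,2/N]}\rho_{(T)}$, and conclude by a union bound and the Gaussian excursion tail, exactly as in Lemma \ref{sec:tightness-estimate-1}. Your argument is valid (the identification in law is a correct application of Lemma \ref{sec:introduction-1}, since $\phi_{\bt(iT/N)}$ is an isometry) and is arguably more elementary and more general, since it would prove tightness in any space with the re-rooting property, hyperbolic or not. What the paper's route buys in exchange is the pointwise inequality \eqref{eq:11}, which is not merely a tool for tightness: it is reused in the proof of Lemma \ref{sec:convergence-1} to obtain the crucial upper bound $d(s,t)\le d_\ee(s,t)$ on subsequential limits. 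So if you adopt your proof of tightness, the hyperbolicity estimate still has to be established separately for the identification of the limit; the paper simply chooses to derive it once and let it serve both purposes. (One small correction to your closing remark: the passage from the bivariate to the univariate modulus is not new relative to the paper --- it is the same first step; the genuinely different ingredient is your mesh-plus-re-rooting control of the univariate supremum.)
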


\begin{proof}
  Note that for every $s,s',t,t'\in [0,1]$, one has, by the triangle
  inequality, 
$$|d_{(T)}(s,t)-d_{(T)}(s',t')|\leq d_{(T)}(s,s')+d_{(T)}(t,t')\, .$$
This shows that the modulus of continuity of $d_{(T)}$ is bounded as
follows: for $\alpha>0$, 
$$\sup_{\substack{|s-s'|\leq \alpha\\ |t-t'|\leq
    \alpha}}|d_{(T)}(s,t)-d_{(T)}(s',t')|\leq 2\sup_{|s-s'|\leq
  \alpha}d_{(T)}(s,s')\, .$$
Now, for every $\eta>0$, we obtain 
\begin{equation}
  \label{eq:10}
  \P\Bigg(\sup_{\substack{|s-s'|\leq \alpha\\ |t-t'|\leq
    \alpha}}|d_{(T)}(s,t)-d_{(T)}(s',t')|\geq 8\eta\Bigg)\leq \P\left(\sup_{|s-s'|\leq
  \alpha}d_{(T)}(s,s')\geq 4\eta\right)\, .
\end{equation}
By $\delta$-hyperbolicity, for every
 $a,b,c\in H$, it holds that  
$$2d_H(a,[b,c])+d_H(b,c)\leq d_H(a,b)+d_H(a,c)+4\delta\, ,$$
see (8.4)
  in \cite{burago_course_2001}. We apply this to
  $a=o,b=\bt(Ts),c=\bt(Ts')$ for some $s\leq s'$, so that, if we let $y\in [\bt(Ts),\bt(Ts')]$ be such that
$d_H(o,y)=d_H(o,[\bt(Ts),\bt(Ts')])$,
$$2\, d_H(o,y)+d_H(\bt(Ts),\bt(Ts'))\leq
d_H(o,\bt(Ts))+d_H(o,\bt(Ts'))+4\delta\, .$$
  Outside the event
$\Lambda_{T}(\eta\sqrt{T})$, we can find $u\in [s,s']$ such that
$d_H(\bt(Tu),y)\leq \eta\sqrt{T}$, so that 
\begin{align*}
  d_H(\bt(Ts),\bt(Ts'))&\leq
  \rho_{T}(Ts)+\rho_{T}(Ts')-2\rho_{T}(Tu)+4\delta+2\eta\sqrt{T}\\
&\leq   \rho_{T}(Ts)+\rho_{T}(Ts')-2\, \inf_{v\in
  [s,s']}\rho_{T}(Tv)+4\delta+2\eta\sqrt{T}\, , 
\end{align*}
which, by letting  $\rho_{(T)}=\rho_T(T\cdot)/\sqrt{T}$, can be
rewritten as
\begin{equation}
  \label{eq:11}
  d_{(T)}(s,s')\leq \rho_{(T)}(s)+\rho_{(T)}(s')-2\, \inf_{v\in
  [s,s']}\rho_{(T)}(v)+\frac{4\delta}{\sqrt{T}}+2\eta\, .
\end{equation}
Hence, we have proved that outside $\Lambda_T(\eta\sqrt{T})$, we have
$$\sup_{|s-s'|\leq
  \alpha}d_{(T)}(s,s')\leq
2\omega(\rho_{(T)},\alpha)+\frac{4\delta}{\sqrt{T}}+2\eta\, ,$$
where $\omega(f,\cdot)$ denotes the
modulus of continuity of the function $f$. 
Therefore, 
$$\P\left(\sup_{|s-s'|\leq
  \alpha}d_{(T)}(s,s')\geq 4\eta\right)\leq
\P(\Lambda_T(\eta\sqrt{T}))+\P\left(
\omega(\rho_{(T)},\alpha)
\geq \eta- \frac{2\delta}{\sqrt{T}}
\right)$$

By \eqref{eq:10}, Theorem \ref{sec:introduction-4} and Lemma
\ref{sec:tightness-estimate-2}, we conclude that 
$$\limsup_{T\to\infty} \P\Bigg(\sup_{\substack{|s-s'|\leq \alpha\\ |t-t'|\leq
    \alpha}}|d_{(T)}(s,t)-d_{(T)}(s',t')|\geq 8\eta\Bigg)\leq
\P\left(\omega(\ee,\alpha)\geq \eta\right)\, ,
$$
and this converges to $0$ as $\alpha\to0$ for any fixed value of
$\eta$. Together with the fact that $d_{(T)}(0,0)=0$, this allows to
conclude by standard results \cite{billingsley_convergence_1999}. 
\end{proof}

\section{Convergence}\label{sec:convergence}

In this section, we finish the proof of Theorem
\ref{sec:introduction-3}. 

\begin{lmm}
  \label{sec:convergence-1}
It holds that 
\begin{equation}
  \label{eq:1}
\left(\rho_{(T)},d_{(T)}\right)
\overset{(d)}{\underset{T\to\infty}{\longrightarrow}}(\ee,d_\ee)
\end{equation}
in distribution in $\mathcal{C}([0,1],\R)\times \mathcal{C}([0,1]^2,\R)$. 
\end{lmm}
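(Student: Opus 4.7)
My plan is to show that every subsequential limit $(\ee, D)$ of the joint laws $((\rho_{(T)}, d_{(T)}))_{T\geq 1}$ in $\mathcal{C}([0,1],\R) \times \mathcal{C}([0,1]^2, \R)$ satisfies $D = d_\ee$ almost surely; combined with tightness this forces the full sequence to converge to $(\ee, d_\ee)$. Tightness of the pair is immediate: the first coordinate is tight by Theorem \ref{sec:introduction-4} and the second by Lemma \ref{sec:tightness-estimate-3}. Extracting a subsequence $T_n\to\infty$ along which joint convergence holds, and appealing to Skorokhod's representation theorem, I will assume almost sure convergence throughout.

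I would next establish the pointwise upper bound $D(s,t) \leq d_\ee(s,t)$ a.s.\ on $[0,1]^2$. Inequality \eqref{eq:11}, derived in the proof of Lemma \ref{sec:tightness-estimate-3}, holds simultaneously for all $s \leq s'$ off the event $\Lambda_T(\eta\sqrt{T})$, whose probability vanishes as $T\to\infty$ by Lemma \ref{sec:tightness-estimate-2}. By further refining the subsequence so that $\P(\Lambda_{T_n}(\eta\sqrt{T_n}))$ is summable for each $\eta$ in a fixed sequence $\eta_k \downarrow 0$, the Borel--Cantelli lemma lets me pass to the almost sure limit in \eqref{eq:11} and then send $\eta\to 0$, yielding
$$D(s,t) \leq \ee(s) + \ee(t) - 2\inf_{[s\wedge t,\, s\vee t]}\ee = d_\ee(s,t)$$
uniformly in $s,t \in [0,1]$, almost surely.

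The crucial step is to promote this inequality to equality by exploiting the re-rooting invariance. For each fixed $u \in [0,1]$, Lemma \ref{sec:introduction-1} asserts that the process $s \mapsto d_{(T)}(u, (s+u)\!\mod 1)$ has the same distribution as $\rho_{(T)}$. Since the rotation map $d \mapsto (s \mapsto d(u, (s+u)\!\mod 1))$ is continuous on $\mathcal{C}([0,1]^2,\R)$, the joint convergence transfers this identity to the limit: $(s \mapsto D(u, (s+u)\!\mod 1)) \overset{(d)}{=} \ee$ in $\mathcal{C}([0,1],\R)$. On the CRT side, \eqref{eq:3} (applied with one coordinate equal to $0$) gives the parallel statement $(s \mapsto d_\ee(u, (s+u)\!\mod 1)) \overset{(d)}{=} \ee$. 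Writing $X_u(s) := D(u, (s+u)\!\mod 1)$ and $Y_u(s) := d_\ee(u, (s+u)\!\mod 1)$, we have $X_u(s) \leq Y_u(s)$ a.s.\ and $\E[X_u(s)] = \E[\ee_s] = \E[Y_u(s)] < \infty$ for every $s$; the standard fact that a nonnegative random variable with zero mean vanishes a.s.\ forces $X_u(s) = Y_u(s)$ a.s., and continuity of both processes upgrades this to $X_u \equiv Y_u$ on $[0,1]$ a.s., for each fixed $u$.

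To conclude, I would let $u$ range over a countable dense subset of $[0,1]$ and use the a.s.\ continuity of $D$ and $d_\ee$ in both variables to deduce $D = d_\ee$ on $[0,1]^2$, a.s. Since every subsequential joint limit is then the law of $(\ee, d_\ee)$, the full sequence converges in distribution to $(\ee, d_\ee)$. The main obstacle is the re-rooting step: the identity in law at the level of the limit $D$ requires the joint (rather than merely marginal) convergence, and one must then combine this identity with the pointwise upper bound to extract almost sure equality of continuous random processes, rather than merely equality in distribution.
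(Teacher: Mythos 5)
Your proposal is correct and follows essentially the same route as the paper: tightness plus Prokhorov and Skorokhod, the upper bound $D\leq d_\ee$ from \eqref{eq:11} and Lemma \ref{sec:tightness-estimate-2}, the re-rooting invariances of $b_T$ and of $\TT_\ee$ to get equality in distribution, and the zero-mean nonnegative-difference argument to upgrade to almost sure equality. The only cosmetic difference is that you apply re-rooting at the level of the rotated process $s\mapsto D(u,(s+u)\!\mod 1)$ for fixed $u$, whereas the paper works pointwise with $d(s,t)\overset{(d)}{=}d(0,t-s)$; both reduce to the same comparison.
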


\begin{proof}
  By Prokhorov's Theorem, based on Theorem \ref{sec:introduction-4}
  and Lemma \ref{sec:tightness-estimate-3}, the laws of the random
  variables in the left-hand side of \eqref{eq:1} form a relatively
  compact family of probability measures
  on $\mathcal{C}([0,1],\R)\times \mathcal{C}([0,1]^2,\R)$. We
  deduce that for any sequence $T_n\to\infty$, we can extract a
  subsequence along which the pair of random variables in
  \eqref{eq:1} converges in distribution towards a certain limiting
  random variable
$(\ee,d)$. 
The slight abuse of notation in denoting the first
 component by $\ee$ is motivated by the fact that its
 marginal law is that of the normalized Brownian excursion, by Theorem
 \ref{sec:introduction-4}. By using Skorokhod's theorem, we may and
 will assume that the convergence holds in the almost sure sense, which will
 simplify some of the arguments to come. 

 To conclude, it suffices to show that $d=d_\ee$ a.s., since this will
 characterize uniquely the limiting distribution, hence allowing to
 obtain the convergence result without having to take extractions.
 Note that $d(0,s)=\ee_s=d_\ee(0,s)$ for every $s\in [0,1]$ almost
 surely, since $d_{(T)}(0,s)=\rho_{(T)}(s)$ and by passing to the
 limit.  But the re-rooting Lemma \ref{sec:introduction-1} implies
 that $d_{(T)}(s,t)$ has same distribution as $d_{(T)}(0,t-s)$ for
 every $s\leq t$ in $[0,1]$. By passing to the limit, we thus see that
 $d(s,t)$ has same distribution as $d(0,t-s)=d_\ee(0,t-s)$. Using the
 re-rooting invariance of the Brownian continuum random tree
 \eqref{eq:3}, we obtain that in turn, this has same distribution as
 $d_\ee(s,t)$.  On the other hand, taking the limit in \eqref{eq:11}
 (and using Lemma \ref{sec:tightness-estimate-2}) shows that
 $d(s,t)\leq d_\ee(s,t)$ almost surely. Therefore, equality must hold
 almost surely, because the expectation of the (nonnegative)
 difference is $0$.
\end{proof}

It is now straightforward to conclude the proof of \eqref{eq:4}, hence
of Theorem \ref{sec:introduction-3}. Still assuming that the
convergence \eqref{eq:1} holds almost surely, the set
$\{(b_T(sT),p_\ee(s)):s\in [0,1]\}$ defines a
correspondence \cite[Section 7.3.3]{burago_course_2001} between
$\RR_T$ and $\TT_\ee$ containing $(o,o_\ee)$, and of distortion
bounded above by
$$\sup_{s,t\in
  [0,1]}|d_{(T)}(s,t)-d_\ee(s,t)|\underset{T\to\infty}{\longrightarrow}
0\, ,\qquad \mbox{a.s.}$$
This shows that the pointed Gromov-Hausdorff distance between
$(\RR_T,d_H/\sqrt{T},o)$ and $\TT_\ee$ converges to $0$ almost surely, as wanted.

\section{The infinite Brownian loop and the
  self-similar CRT}\label{sec:infin-brown-loop}

We now argue that our methods also allow to prove a result related to
Theorem \ref{sec:introduction-3}, which deals with the so-called {\em
  infinite Brownian loop}. The latter can be obtained as a local limit
of large Brownian loops. Specifically, let us extend the bridge
$b_T$ by $T$-periodicity and view it a random function $(b_T(t),t\in \R)$. We
equip the space $\mathcal{C}(\R,H)$ with the compact-open topology, so
that convergence in this space is equivalent to uniform convergence
over compact intervals.

An important result by Anker, Bougerol and Jeulin \cite[Theorem 1.2,
Proposition 2.6 and Proposition 4.2]{anker_infinite_2002} implies that
in every non-compact symmetric space $H$, as $T\to\infty$, the
Brownian bridge $b_T$ converges in distribution in $\mathcal{C}(\R,H)$
towards a limit $b_\infty$, called the infinite Brownian loop. As
before in this paper, we will only focus on the case where $H$ is the
hyperbolic space, which corresponds to rank~$1$ symmetric spaces.

Anker, Bougerol and Jeulin further show the following result. Let
$\rho_\infty(t)=d_H(o,b_\infty(t))$ for $t\in \R$. Theorems 1.4, 1.5
and 7.1 (iii) in \cite{anker_infinite_2002}, again in the very special case
of rank~$1$ symmetric spaces, can be stated as follows. 

\begin{thm}
  \label{sec:infin-brown-loop-2}
  Let $R,R'$ be two independent Bessel processes of dimension $3$
  started from $0$, and let $X_t=R_t$ if $t\geq 0$, $X_t=R'_{-t}$ if
  $t<0$. Then it holds that
  \begin{equation}
    \label{eq:6}
\left( a\,\rho_\infty(t/a^2),t\in \R\right)\overset{(d)}{\underset{a\to 0}{\longrightarrow}}X\, ,
  \end{equation}
in distribution for the compact-open topology on $\mathcal{C}(\R,\R)$.
\end{thm}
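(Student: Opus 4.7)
The plan is to treat the forward ($t\geq 0$) and backward ($t\leq 0$) parts of the rescaled process separately, exploiting the Markov structure of $b_\infty$.

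\emph{Forward/backward decomposition.} The infinite Brownian loop $b_\infty$ is a two-sided Markov process (as a local limit of the Markov bridges $b_T$), and since $b_\infty(0)=o$ is deterministic the sub-processes $(b_\infty(t))_{t\geq 0}$ and $(b_\infty(-t))_{t\geq 0}$ are unconditionally independent. Moreover the time-reversal symmetry $b_T(T-\cdot)\overset{(d)}{=}b_T$ together with the $T$-periodic extension passes to the local limit and yields $b_\infty(-\cdot)\overset{(d)}{=}b_\infty(\cdot)$. Both properties survive the rescaling, so it suffices to prove convergence of the rescaled forward radial part $(a\,\rho_\infty(t/a^2))_{t\geq 0}$ to a $3$-Bessel process $R$ in $\CC([0,\infty),\R)$; the full statement in $\CC(\R,\R)$, with $R$ and $R'$ independent, then follows by combining with the backward version via the pre-limit independence.

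\emph{Identifying the forward limit.} The strategy is a three-scale matching relying on Theorem \ref{sec:introduction-4} and the local behaviour of the Brownian excursion at $0$. Fix $t>0$ and an auxiliary diverging scale $T(a)=a^{-4}$, so that $s_a:=t/a^2\ll T(a)$. By local convergence $b_{T(a)}\to b_\infty$ one expects $\rho_\infty(s_a)\approx \rho_{T(a)}(s_a)$; writing $s_a=T(a)\,u_a$ with $u_a=t a^2\to 0$ and applying Theorem \ref{sec:introduction-4} to $\rho_{T(a)}$, one obtains
\begin{equation*}
a\,\rho_\infty\!\left(\tfrac{t}{a^2}\right)\;\approx\; a\sqrt{T(a)}\cdot\frac{\rho_{T(a)}(T(a)\,u_a)}{\sqrt{T(a)}}\;\approx\; a^{-1}\ee_{t a^2}\, ,
\end{equation*}
using $a\sqrt{T(a)}=a^{-1}$. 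To conclude one invokes the classical description of $\ee$ as a $3$-Bessel bridge from $0$ to $0$ of length $1$: on short time scales the conditioning to return to $0$ at time $1$ is asymptotically irrelevant, and one has
$$\left(a^{-1}\ee_{a^2 t}\right)_{t\geq 0}\;\overset{(d)}{\underset{a\to 0}{\longrightarrow}}\;(R_t)_{t\geq 0}\, .$$

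\emph{Main obstacle.} The three limits above---local convergence $b_T\to b_\infty$, the macroscopic excursion limit of Theorem \ref{sec:introduction-4}, and the local Bessel structure of $\ee$ at $0$---live on different, nested scales, and the matching must be made uniform for $t$ in compact sets, which cannot be arranged by iterated limits. The cleanest route is to first establish tightness of the laws of $(a\,\rho_\infty(\cdot/a^2))_{a>0}$ on $[0,N]$ for each $N>0$, e.g.\ via Kolmogorov's criterion applied to moment bounds on $d_H(b_\infty(u),b_\infty(v))$ passed to the local limit from the corresponding bounds for $b_T$ (accessible by the methods of Section \ref{sec:tightness-estimate}), and then identify finite-dimensional marginals of any subsequential limit by the computation above. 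An alternative, in the spirit of \cite{anker_infinite_2002}, would represent $\rho_\infty$ as an explicit $h$-transform of the radial hyperbolic Brownian motion, write down its SDE, and verify generator convergence to the $3$-Bessel generator under the rescaling; this avoids the multi-scale matching but bypasses Theorem \ref{sec:introduction-4} altogether.
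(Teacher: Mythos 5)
The paper does not actually prove this statement: Theorem \ref{sec:infin-brown-loop-2} is imported from Anker, Bougerol and Jeulin \cite{anker_infinite_2002} (their Theorems 1.4, 1.5 and 7.1(iii), specialized to rank one), so there is no internal proof to compare yours against. Judged on its own, your argument has a genuine gap at the identification step, and you essentially acknowledge it. The approximation $\rho_\infty(t/a^2)\approx\rho_{T(a)}(t/a^2)$ with $T(a)=a^{-4}$ is not a consequence of the local convergence $b_T\to b_\infty$: that convergence controls compact time windows, whereas here the time argument $t/a^2$ diverges with $a$. Making the three nested scales match would require quantitative or uniform versions of both the local limit $b_T\to b_\infty$ and of Theorem \ref{sec:introduction-4} near $t=0$, neither of which is available. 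Consequently your proposed repair (tightness plus identification of finite-dimensional marginals ``by the computation above'') does not close the gap, because the computation above is precisely the unjustified interchange of limits. Note also that you cannot borrow the tightness statements of Section \ref{sec:infin-brown-loop} (Lemmas \ref{sec:infin-brown-loop-5} and \ref{sec:infin-brown-loop-6}) for this purpose, since their proofs in the paper already use the convergence \eqref{eq:6} you are trying to establish.

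Two further points. First, the independence of $(b_\infty(t))_{t\geq 0}$ and $(b_\infty(-t))_{t\geq 0}$ does not follow from ``Markov process with a deterministic value at $0$'': in the periodic parametrization, time $0$ is where the two ends of the bridge are glued, and the restrictions of $b_T$ to $[0,A]$ and $[T-A,T]$ are dependent for every finite $T$ (they are only asymptotically independent as $T\to\infty$). The independence of the two halves of $b_\infty$ is true, but it is part of the structural description of the infinite loop in \cite{anker_infinite_2002} (each half is the ground-state relativized Brownian motion), not a soft consequence of being a local limit of Markov bridges. Second, the route you relegate to an ``alternative'' is in fact the actual proof: the paper itself records, from \cite{anker_infinite_2002}, that $Y_t=d_H(o,b_\infty(t))^2$ solves the SDE \eqref{eq:9} with drift $1+2g(Y_t)$ and $g\to 1$ at infinity, i.e.\ asymptotically the squared Bessel equation of dimension $3$; under the scaling $Y\mapsto a^2Y_{\cdot/a^2}$ the drift tends to $3$ and one concludes by comparison and diffusion approximation. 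If you want a self-contained argument, that is the route to carry out rather than the multi-scale matching.
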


From the process $X$, we can build a locally compact pointed random
metric space called the self-similar Brownian continuum random tree
\cite{aldous_continuum_1991}, in a similar way to Section
\ref{sec:main-results}. Namely, we define a pseudo-distance $d_X$ on
$\R$ by the formula
$$d_X(s,t)=X_s+X_t-\check{X}(s,t)\, ,$$
where $\check{X}(s,t)=\inf_{s\wedge t\leq u\leq s\vee t}X_u$ whenever
$st\geq 0$, and $\check{X}(s,t)=\inf_{u\notin[s\wedge t,s\vee t]}X_u$
otherwise. We let $\TT_X=(X/\{d_X=0\},d_X,o_X)$ be the quotient
metric space, pointed at $o_X=p_X(0)$ where $p_X$ is the canonical
projection. This defines a locally compact, complete pointed
$\R$-tree. 

We let $\RR_\infty=\{b_\infty(t),t\in \R\}$ be the range of
$b_\infty$, which we canonically view as the pointed metric space
$(\{b_\infty(t),t\in \R\}, d_H,o)$. We use the notation $a\, M=(M, a
d,x)$ whenever $(M,d,x)$ is a pointed metric space and $a>0$.

\begin{thm}
  \label{sec:infin-brown-loop-3}
It holds that 
$$a\, \RR_\infty\overset{(d)}{\underset{a\to 0}{\longrightarrow}}
\TT_X\, ,$$
in distribution for the local Gromov-Hausdorff topology. This
convergence holds jointly with \eqref{eq:6}. 
\end{thm}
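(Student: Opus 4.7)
The plan is to transpose the three-ingredient argument of Theorem \ref{sec:introduction-3}---a re-rooting invariance, hyperbolicity of $H$, and a functional radial limit---to the two-sided, non-compact setting of $b_\infty$ and $\TT_X$. The radial ingredient is now Theorem \ref{sec:infin-brown-loop-2}. For re-rooting, I would pass Lemma \ref{sec:introduction-1} to the limit $T\to\infty$ using $b_T\to b_\infty$ in $\mathcal{C}(\R,H)$ to obtain that, for every $\tau\in\R$,
$$\bigl(\phi_{b_\infty(\tau)}(b_\infty(\tau+u))\bigr)_{u\in\R}\overset{(d)}{=}b_\infty.$$
Because the local Gromov--Hausdorff topology only sees balls of fixed radius around the basepoint, and $X_t\to+\infty$ as $|t|\to\infty$ almost surely, it will suffice to establish the convergence on each compact time window $[-K,K]$ with $K$ arbitrarily large.

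Fixing such $K>0$ and setting
$$\rho_{(a)}(t)=a\rho_\infty(t/a^2),\qquad d_{(a)}(s,t)=a\,d_H(b_\infty(s/a^2),b_\infty(t/a^2)),$$
I would next show that the arguments of Section \ref{sec:tightness-estimate} carry over essentially verbatim on $[-K,K]$, substituting Theorem \ref{sec:infin-brown-loop-2} for Theorem \ref{sec:introduction-4} and using the re-rooting lemma above in place of Lemma \ref{sec:introduction-1}. This yields tightness of $(\rho_{(a)},d_{(a)})$ in $\mathcal{C}([-K,K],\R)\times\mathcal{C}([-K,K]^2,\R)$, as well as the geodesic-approximation estimate analogous to Lemma \ref{sec:tightness-estimate-2}: with probability tending to $1$ as $a\to 0$, every geodesic $[b_\infty(s/a^2),b_\infty(t/a^2)]$ with $s,t\in[-K,K]$ stays within $\eta/a$ of $\RR_\infty(s/a^2,t/a^2)$.

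With these tools in hand, I would identify limits as in Lemma \ref{sec:convergence-1}: extract subsequential limits $(\bar\rho,\bar d)$ via Prokhorov and Skorokhod, so that $\bar\rho=X|_{[-K,K]}$ and in particular $\bar d(0,s)=X_s$. For the almost-sure upper bound, I would choose $\tau^*\in\R$ so that $X_{\tau^*}\approx\check X(s,t)$ (the interior infimum when $st\geq 0$, the exterior one when $st<0$), apply the hyperbolic inequality $2\,d_H(a,[b,c])+d_H(b,c)\leq d_H(a,b)+d_H(a,c)+4\delta$ with base point $b_\infty(\tau^*/a^2)$ and endpoints $b_\infty(s/a^2),b_\infty(t/a^2)$, and invoke the geodesic-approximation estimate to obtain $\bar d(s,t)\leq d_X(s,t)$ almost surely. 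Conversely, the re-rooting lemma gives $d_{(a)}(s,t)\overset{(d)}{=}d_{(a)}(0,t-s)$, hence $\bar d(s,t)\overset{(d)}{=}X_{t-s}$; combining with the natural analog of \eqref{eq:3} for $\TT_X$ (i.e., $d_X(s,t)\overset{(d)}{=}d_X(0,t-s)=X_{t-s}$, a consequence of the two-sided Bessel structure of $X$), one gets $\bar d(s,t)\overset{(d)}{=}d_X(s,t)$. The a.s.\ inequality combined with pointwise equality in distribution forces $\bar d=d_X$ a.s.\ on $[-K,K]^2$, and the correspondence $\{(b_\infty(s/a^2),p_X(s)):s\in[-K,K]\}$ then converts this into the claimed local Gromov--Hausdorff convergence, jointly with \eqref{eq:6}.

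The main obstacle is the a.s.\ upper bound in the opposite-sign case $s<0<t$: there, the formula $d_X(s,t)=X_s+X_t-2\check X(s,t)$ involves an exterior infimum over $\R\setminus(s,t)$, so the naive hyperbolic inequality with base $o=b_\infty(0)$ yields only $\bar d(s,t)\leq X_s+X_t$, which is strictly weaker than needed. The fix is to re-root at a time $\tau^*$ where $X$ nearly achieves the exterior infimum and to track the re-rooted radial process back to $X$ under the isometry $\phi_{b_\infty(\tau^*/a^2)}$; the careful bookkeeping, aided by the geodesic-approximation estimate, is the technical heart of the argument. A second subtlety to be justified is the re-rooting invariance of $\TT_X$ used in the distributional matching.
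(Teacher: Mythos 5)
Your overall strategy (re-rooting of $b_\infty$, hyperbolicity, the radial limit of Theorem \ref{sec:infin-brown-loop-2}, tightness, and identification of subsequential limits) matches the paper's, and the same-sign case $st\geq 0$ goes through exactly as you describe. But there are two genuine gaps. The first is the localization step. You assert that because the local Gromov--Hausdorff topology only sees balls of fixed radius and $X_t\to\infty$, it suffices to work on compact time windows $[-K,K]$. This does not follow: the convergence $\rho_{(a)}\to X$ holds only in the compact-open topology, so for each fixed $a$ it gives no control on $\rho_{(a)}(t)=a\,\rho_\infty(t/a^2)$ for $|t|>K$, and in principle the loop could return near the origin at times of order larger than $K/a^2$, contaminating the ball $B(a\,\RR_\infty,r)$ with points outside $\RR_\infty(K/a^2)$. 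One must prove $\lim_{A\to\infty}\limsup_{a\to 0}\P\bigl(B(a\,\RR_\infty,r)\not\subset \RR_\infty(A/a^2)\bigr)=0$, and this requires a quantitative input on $b_\infty$ beyond your three ingredients: the paper uses the fact (Anker--Bougerol--Jeulin) that $\rho_\infty^2$ solves an SDE with drift eventually exceeding $3/4$, couples it from a large stopping level with a squared Bessel process of dimension $5/2$, and invokes transience of the latter. Without some such uniform-in-$a$ lower bound on the radial part at large times, the reduction to compact windows is unjustified.

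The second gap is the almost-sure upper bound $\bar d(s,t)\leq d_X(s,t)$ when $s<0<t$, which you correctly identify as the technical heart but whose proposed fix does not work as stated. Re-rooting at a time $\tau^*$ near the argmin of the exterior infimum only yields \emph{distributional} identities, and only at deterministic shift times, whereas you need an almost-sure inequality along the coupled sequence at a random, $X$-dependent time; moreover the hyperbolic inequality based at $b_\infty(\tau^*/a^2)$ still leaves you with the opposite-sign quantity $\bar d(\tau^*,t)$ on the right-hand side (and the crude bound via the origin only gives $\bar d(s,t)\leq X_s+X_t$). The paper's resolution is structurally different: having shown $d=d_X$ on $\R_+^2$ and $\R_-^2$ and $d\overset{(d)}{=}d_X$ globally by re-rooting, it invokes the almost-sure \emph{uniqueness of the geodesic ray} of $\TT_X$ from its root to conclude that the images of $\Gamma_+$ and $\Gamma_-$ in the limit tree coincide, and then builds an explicit path from $p(s)$ to $p(t)$ through this common spine of length exactly $X_s+X_t-2(h_s\wedge h_t)=d_X(s,t)$. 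You would need this (or an equivalent argument identifying the two ends of the loop) to close the opposite-sign case; likewise, the re-rooting invariance of $d_X$ that you use for the distributional matching is itself a nontrivial statement, proved in the paper by an absolute-continuity coupling with rescaled Brownian excursions rather than directly from the two-sided Bessel description.
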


This result can be obtained by adapting our arguments, but since we
are now dealing with local Gromov-Hausdorff convergence \cite[Chapter
8.1]{burago_course_2001}, which (very) roughly speaking amounts to the
Gromov-Hausdorff convergence of balls centered at the distinguished
point, some extra care should be taken. 

\subsection{Basic properties of $\TT_X$}

Let us gather some of the important properties of the self-similar
CRT. First, it also satisfies a property of invariance under
re-rooting that will be crucial to us. Here and below, the set
$\mathcal{C}(\R^2,\R)$ will be endowed with the compact-open topology.

\begin{prp}
  \label{sec:infin-brown-loop-8}For every $t\in \R$, the random
  function $(d_X(s+t,s'+t))_{s,s'\in \R}$ in $\mathcal{C}(\R^2,\R)$
  has same distribution as $d_X$.
\end{prp}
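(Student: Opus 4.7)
The plan is to lift the re-rooting invariance from the infinite Brownian loop $b_\infty$ to the self-similar CRT via a scaling-limit argument that mirrors the one used in the compact case in Sections~\ref{sec:tightness-estimate} and~\ref{sec:convergence}. The first step is to establish that the pairwise distance process of $b_\infty$ is invariant under time shifts: for every $t_0\in\R$,
$(d_H(b_\infty(t_0+s),b_\infty(t_0+s')))_{s,s'\in\R}\overset{(d)}{=}(d_H(b_\infty(s),b_\infty(s')))_{s,s'\in\R}$.
This follows from Lemma~\ref{sec:introduction-1} applied to $b_T$ (extended by $T$-periodicity): the identity holds at the prelimit level after composing with the hyperbolic isometry $\phi_{b_T(t_0)}$, and hence at the level of pairwise distances (which are invariant under any isometry); the convergence $b_T\to b_\infty$ of Anker, Bougerol and Jeulin in $\mathcal{C}(\R,H)$ then allows one to pass this distance-level identity to the limit $T\to\infty$ without having to track the isometry itself.

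The second step is to introduce, for $a>0$, the rescaled pairwise distance $d^{(a)}_\infty(s,s'):=a\,d_H(b_\infty(s/a^2),b_\infty(s'/a^2))$. Substituting $t_0=u/a^2$ in the identity of step one immediately yields, for every $u\in\R$, the translation invariance
$(d^{(a)}_\infty(s+u,s'+u))_{s,s'\in\R}\overset{(d)}{=}(d^{(a)}_\infty(s,s'))_{s,s'\in\R}$.
The third step is to prove the convergence $d^{(a)}_\infty\overset{(d)}{\longrightarrow}d_X$ in $\mathcal{C}(\R^2,\R)$ endowed with the compact-open topology, as $a\to 0$. This adapts the estimates of Section~\ref{sec:tightness-estimate} to the infinite-loop regime: the Gromov hyperbolicity inequality yields an analogue of~\eqref{eq:11} on each compact time window, which combined with Theorem~\ref{sec:infin-brown-loop-2} gives tightness, and the limit is identified by the argument of Lemma~\ref{sec:convergence-1}, using the translation invariance of step one to match the one-parameter marginals of the limit with those prescribed by $d_X$. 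Combining these three steps, the translation invariance passes to the limit as $a\to 0$, which is exactly the statement of the proposition.

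The main obstacle lies in step three. Tightness has to be established on arbitrary compact subsets of $\R^2$ rather than on the fixed interval $[0,1]^2$, and is particularly delicate around the origin in time, where the two-sided structure of $X$ (assembled from two independent Bessel-$3$ processes for $t\geq 0$ and $t<0$) enters. One also has to verify that the two-sided formula for $d_X$, with its case distinction on the sign of $ss'$ in the definition of $\check X$, matches the limit of $d^{(a)}_\infty$; this should follow from a local version of Lemma~\ref{sec:tightness-estimate-2}, saying that in the limit portions of $b_\infty$ that straddle the origin in time must come close to $o=b_\infty(0)$, which forces the infimum to be taken on the opposite side of $0$ for pairs $(s,s')$ of opposite signs. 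Once this convergence is in hand, the translation invariance transfers to $d_X$ automatically from the equality of laws at the prelimit level.
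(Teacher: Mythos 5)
Your steps one and two are sound (they amount to Lemma \ref{sec:infin-brown-loop-4} and its immediate consequence for the rescaled distance process), but step three makes the argument circular. To identify the limit of $d^{(a)}_\infty$ as $d_X$ you propose to run ``the argument of Lemma \ref{sec:convergence-1}'': there, after showing $d(s,t)\overset{(d)}{=}d(0,t-s)=d_\ee(0,t-s)$ by re-rooting at the prelimit level, the decisive step is to invoke the re-rooting invariance \eqref{eq:3} of the \emph{limit} object to convert $d_\ee(0,t-s)$ into $d_\ee(s,t)$ in distribution; only then does the a.s.\ upper bound $d\le d_\ee$ force equality. The analogue in the infinite-loop setting requires knowing that $d_X(0,t-s)$ and $d_X(s,t)$ have the same law --- which is precisely the proposition you are trying to prove. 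Indeed, the paper's Lemma \ref{sec:infin-brown-loop-9} (your step three) explicitly cites Proposition \ref{sec:infin-brown-loop-8} at exactly that point, so the proposition must be available \emph{before} the convergence $d_{(a)}\to d_X$ is established and cannot be deduced from it.

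This is why the paper proves the proposition entirely at the level of the limiting processes, with no reference to $H$: it approximates the two-sided Bessel-$3$ process $X$ on a fixed window by a Brownian excursion of duration $\lambda$ (its restrictions to $[0,A]$ and $[\lambda-A,\lambda]$ together with its minimum on $[A,\lambda-A]$), using an absolute continuity result of Curien and Le Gall whose density tends to $1$ as $\lambda\to\infty$; the re-rooting invariance \eqref{eq:3} of the compact CRT then transfers to $d_X$ up to a total variation error that vanishes. If you wanted to salvage your route you would need an independent identification of the law of $d_X(s,t)$, e.g.\ a direct proof that $X_s+X_t-2\inf_{[s,t]}X\overset{(d)}{=}X_{t-s}$ for $0\le s\le t$, plus a separate treatment of the case $st<0$: there the a.s.\ bound coming from \eqref{eq:2} is $X_s+X_t$ and does \emph{not} match $d_X(s,t)$, and the paper handles this with the uniqueness of the geodesic ray of $\TT_X$, not with a claim that portions of $b_\infty$ straddling time $0$ return near $o$ (which would contradict transience).
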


\begin{proof}
  This can be shown from the re-rooting invariance of the CRT, by a
  limiting argument. However, some care has to be taken. Let
  $\ee^\lambda_t=\sqrt{\lambda}\, \ee(t/\lambda),0\leq t\leq \lambda$ be the Brownian
  excursion with duration $\lambda$. We let
  $d^\lambda_\ee(s,t)=\sqrt{\lambda}\, d_\ee(s/\lambda,t/\lambda)$, 
  defining a random pseudo-distance on $[0,\lambda]$. By \cite[Proposition
  3]{curien_brownian_2014}, for any $A\in (0,\lambda/2)$, the triplet 
$$\left((\ee^\lambda_t)_{0\leq t\leq A},(\ee^\lambda_{\lambda-t})_{0\leq
    t\leq A},\min_{A\leq t\leq \lambda-A}\ee^\lambda_t\right)$$ is
absolutely continuous with respect to the law of $((X_t)_{0\leq t\leq
  A},(X_{-t})_{0\leq t\leq A},\check{X}(-A,A))$, with a density
$\Delta_{\lambda,A}(\omega(A),\omega'(A),z)$ such that
$\Delta_{\lambda,A}(x,y,z)$ converges to $1$ as $\lambda\to\infty$
whenever $0<z< x\wedge y$. Therefore, for every $\eps,A>0$, there
exists $\lambda_0=\lambda_0(\eps,A)>2A$ and a coupling of
$\ee^\lambda$ and $X$ on some probability space such that for every
$\lambda\geq \lambda_0$, outside an event
$\mathcal{A}=\mathcal{A}(\eps,A)$ of probability at most $\eps$, we
have
$$\ee^\lambda_t=X_t\, ,\quad \ee^\lambda_{\lambda-t}=X_{-t}\quad\mbox{
  for }t\in [0,A]\, ,\qquad \mbox{and } \quad\min_{A\leq t\leq
  \lambda-A}\ee^\lambda_t=\check{X}(-A,A)\, .$$ In particular, still on
$\mathcal{A}^c$, it holds that for $s,s'\in [0,A]$,
$$d^\lambda_\ee(s,s')=d_X(s,s')\, ,\quad
d^\lambda_\ee(\lambda-s,\lambda-s')=d_X(-s,-s')\, ,\quad
d^\lambda_\ee(s,T-s')=d_X(s,-s')\, .$$
Defining $\tilde{\ee}^\lambda_t=\ee^\lambda_t$ for $t\in [0,\lambda/2]$
and $\tilde{\ee}^\lambda_t=\ee^\lambda_{\lambda+t}$ for $t\in
[-\lambda/2,0]$, we let 
$$\tilde{d}^\lambda_\ee(s,t)=\tilde{\ee}^\lambda_s+\tilde{\ee}^\lambda_t-2\check{\ee}^\lambda(s,t)\,
,\qquad s,t\in [-\lambda/2,\lambda/2]$$ where $\check{\ee}^\lambda(s,t)=\min_{s\wedge t\leq u\leq s\vee
  t}\tilde{\ee}^\lambda_u$ if $st\geq 0$, and $\min_{u\in
  [-\lambda/2,s\wedge t]\cup [s\vee
  t,\lambda/2]}\tilde{\ee}^\lambda_u$ otherwise. Then on the coupling
event $\mathcal{A}^c$, one has $\tilde{d}_\ee(s,s')=d_X(s,s')$ for
every $s,s'\in [-A,A]$. 

The re-rooting invariance for $d_{\ee}$ together with the
definition of $\ee^\lambda$ shows that if $s\oplus_\lambda t$ denotes the
representative of $s+t$ modulo $\lambda$ in the interval
$[-\lambda/2,\lambda/2)$, then
$(\tilde{d}^\lambda_{\ee}(s\oplus_\lambda t,s'\oplus_\lambda t),s,s'\in
[-\lambda/2,\lambda/2])$ has same distribution as
$\tilde{d}^\lambda_{\ee}$. Fixing the value of $t$ and fixing
$A>2|t|$, for 
$\lambda\geq \lambda_0$, we see that $s\oplus_\lambda t=s+t\in [-A,A]$ for every
$s\in [-A/2,A/2]$, so on the coupling event $\mathcal{A}^c$
$$(\tilde{d}^\lambda_{\ee}(s\oplus_\lambda t,s'\oplus_\lambda
t))_{s,s'\in [-A/2,A/2]}=(d_X(s+t,s'+t))_{s,s'\in [-A/2,A/2]}$$ while
this has same law as the restriction of $\tilde{d}^\lambda_\ee$ to
$[-A/2,A/2]$. Since the left-hand side has same distribution as
$\tilde{d}^\lambda_\ee$ restricted to $[-A/2,A/2]^2$, and that the
latter is equal on the coupling event $\mathcal{A}^c$ to the restriction
of $d_X$ to $[-A/2,A/2]^2$, we see for every $A,\eps>0$, the total
variation distance between the laws of $(d_X(t+s,t+s'))_{s,s'\in
  [-A/2,A/2]}$ and $(d_X(s,s'))_{s,s'\in [-A/2,A/2]}$ is at most
$2\P(\mathcal{A}(\eps,A))\leq 2\eps$. Since $\eps$ is arbitrary, we
see that these laws are equal, and since $A$ is arbitrary as well, we
can conclude.
\end{proof}

The next property is a geometric property, which is often referred to
as the fact that $\TT_X$ has ``a unique infinite spine'', also called
``baseline'' in \cite{aldous_continuum_1991}. Recall that
a geodesic ray in a length metric space is a subset that is isometric
to $\R_+$ (that is identified with its natural parametrization by
$\R_+$). This fact is essentially a consequence of the way it is
introduced in \cite{aldous_continuum_1991}, but it is also easy to
prove it directly from the above definition, and we leave it as an
exercise. 

\begin{prp}
\label{sec:basic-prop-tt_x-1}
Almost surely, $\TT_X$ has a unique geodesic ray
starting from $o_X$. 
\end{prp}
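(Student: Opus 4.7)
My plan is to exhibit a specific geodesic ray $\gamma$ from $o_X$—the \emph{spine}, constructed explicitly from $X$—and then to prove uniqueness by showing that any other geodesic ray $\gamma'$ from $o_X$ would have to leave $\gamma$ and enter a bounded subtree, which contradicts escape to infinity.

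\textbf{Construction.} By transience of the Bessel 3 processes $R, R'$, the functions $m_+(t) := \inf_{s \geq t} X_s$ (for $t \geq 0$) and $m_-(t) := \inf_{s \leq t} X_s$ (for $t \leq 0$) are continuous and nondecreasing in $|t|$, with $m_\pm(0) = 0$ and $m_\pm(t) \to \infty$. For $c \geq 0$, set $\sigma^+(c) := \sup\{t \geq 0 : X_t = c\}$ and $\sigma^-(c) := \inf\{t \leq 0 : X_t = c\}$; these are a.s.\ finite, and $X_s \geq c$ for $s \geq \sigma^+(c)$ (resp.\ $s \leq \sigma^-(c)$). Using the $st<0$ case of $\check{X}$ we get $\check{X}(\sigma^-(c), \sigma^+(c)) = m_-(\sigma^-(c)) \wedge m_+(\sigma^+(c)) = c$, hence $d_X(\sigma^-(c), \sigma^+(c)) = 0$, so $\gamma(c) := p_X(\sigma^+(c)) = p_X(\sigma^-(c))$ is a well-defined single point of $\TT_X$. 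A same-sign computation gives $d_X(\sigma^+(c), \sigma^+(c')) = |c - c'|$, so $c \mapsto \gamma(c)$ is an isometric embedding of $\R_+$ into $\TT_X$ with $\gamma(0) = o_X$: a geodesic ray from $o_X$.

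\textbf{Uniqueness.} Let $\gamma'$ be any geodesic ray from $o_X$ parametrized by arc length, and let $c^\star := \sup\{c \geq 0 : \gamma'(c) = \gamma(c)\}$. Since $\TT_X$ is an $\R$-tree, this set is an interval $[0, c^\star]$. Assume $c^\star < \infty$. Then $\gamma'|_{(c^\star, \infty)}$ is contained in a single connected component $\mathcal{B}$ of $\TT_X \setminus \{\gamma(c^\star)\}$, which is neither the component of $o_X$ nor the component containing the spine continuation $\gamma((c^\star, \infty))$ (otherwise $\gamma'$ would agree with $\gamma$ on some $[0, c']$ with $c' > c^\star$, against the definition of $c^\star$). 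It suffices to prove $\mathcal{B}$ is bounded, since then $c = d_X(o_X, \gamma'(c)) \leq c^\star + \mathrm{diam}(\mathcal{B})$ would stay bounded while $c \to \infty$.

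\textbf{Boundedness of $\mathcal{B}$.} Suppose $\mathcal{B}$ were unbounded. Since $\{p_X(u):u\in\R\}$ is dense in $\TT_X$ and $\mathcal{B}$ is open, we may choose $p_X(u_n) \in \mathcal{B}$ with $d_X(\gamma(c^\star), p_X(u_n)) \to \infty$. For any $y \in \mathcal{B}$, the geodesic from $o_X$ to $y$ passes through $\gamma(c^\star)$, so $d_X(o_X, y) = c^\star + d_X(\gamma(c^\star), y)$; applying this to $y = p_X(u_n)$ gives $X_{u_n} \to \infty$, whence $|u_n| \to \infty$ by transience. Passing to a subsequence, assume $u_n \to +\infty$ (the other case is symmetric). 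For any fixed $\epsilon > 0$ and all large $n$, $u_n > \sigma^+(c^\star + \epsilon)$, so the ancestor of $p_X(u_n)$ at height $c^\star + \epsilon$ in the positive-side CRT is $p_X(\sigma^+(c^\star + \epsilon)) = \gamma(c^\star + \epsilon)$: the geodesic from $o_X$ to $p_X(u_n)$ then passes through $\gamma(c^\star + \epsilon)$, so $p_X(u_n)$ lies in the spine continuation component, contradicting $p_X(u_n) \in \mathcal{B}$. The main subtlety is precisely this final step, which relies on the construction of $\gamma$ via the $st<0$ formula to identify the forward and backward tails of $X$ into a single spine direction.
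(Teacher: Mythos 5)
Your proof is correct. The paper deliberately leaves this proposition as an exercise, so there is no in-paper argument to compare against; your construction of the ray via $\sigma^{\pm}(c)$ is exactly the spine $p_X(\Gamma_\pm(r))$ that the paper identifies in \eqref{eq:12}--\eqref{eq:13}, and your uniqueness argument (any competing ray would branch off into a component of $\TT_X\setminus\{\gamma(c^\star)\}$ which, by the last-passage times $\sigma^\pm(c^\star+\epsilon)$ and the $st<0$ clause of $\check X$, must be bounded) is a sound and complete direct verification of the fact the authors assert is "easy to prove directly from the definition."
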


It is not difficult to see that if we let 
\begin{equation}
  \label{eq:12}
\Gamma_+(r)=\sup\{t\geq 0:X_t=r\}\, ,\qquad \Gamma_-(r)=\inf\{t\leq
0:X_t=r\}\, ,\qquad r\geq 0\, ,
\end{equation}
then the unique geodesic ray of the last proposition is $
p_X(\Gamma_+(r))=p_X(\Gamma_-(r)),r\geq 0$, which means
that 
\begin{equation}
  \label{eq:13}
  d_X(\Gamma_\pm(r),\Gamma_\pm(r'))=|r-r'|\quad \mbox{ for every
  }\quad r,r'\geq
0\, .
\end{equation}

\subsection{Basic properties of $\RR_\infty$}

We now discuss the results of Sections \ref{sec:invariance-under-re}
and \ref{sec:tightness-estimate} that are easily generalized to the
infinite loop. The re-rooting Lemma \ref{sec:introduction-1}
generalizes indeed, by a simple passage as $T\to\infty$ that we leave
as an exercise to the reader.

\begin{lmm}
  \label{sec:infin-brown-loop-4}
For every $t\in \R$, the processes $b_\infty$ and
$\phi_{b_\infty(t)}(b_\infty(\cdot+t))$ have the same distribution. 
\end{lmm}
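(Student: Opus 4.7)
The plan is to deduce this from the finite-duration re-rooting Lemma \ref{sec:introduction-1} by taking the limit $T\to\infty$, using the convergence $b_T\to b_\infty$ in $\mathcal{C}(\R,H)$ endowed with the compact-open topology (as guaranteed by \cite{anker_infinite_2002}).

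First, I fix $t\in\R$ and an arbitrary compact window $[-A,A]$ with $A>|t|$, and consider the map
$$\Psi_t:\mathcal{C}(\R,H)\longrightarrow \mathcal{C}(\R,H)\, ,\qquad \Psi_t(f)(s)=\phi_{f(t)}(f(s+t))\, .$$
The point here is that the hyperbolic isometry $\phi_x$ depends continuously on $x$ (as can be seen in the Poincaré ball model, where $\phi_x$ is explicitly a hyperbolic isometry varying smoothly with $x$), so $(x,y)\mapsto \phi_x(y)$ is jointly continuous on $H\times H$. Combined with the fact that translation by $t$ is continuous in the compact-open topology, this implies that $\Psi_t$ is continuous.

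Next, for every $T>A+|t|$ and every $s\in [-A,A]$, the ``$s+t\!\mod T$'' appearing in Lemma \ref{sec:introduction-1} coincides (up to the $T$-periodic identification used to extend $b_T$ to $\R$) with plain $s+t$. Hence, in restriction to $[-A,A]$, we have $\phi_{b_T(t)}(b_T(\cdot+t\!\mod T))=\Psi_t(b_T)(\cdot)$, and Lemma \ref{sec:introduction-1} gives that the laws of $\Psi_t(b_T)$ and of $b_T$, both viewed as random elements of $\mathcal{C}([-A,A],H)$, are equal. By the Anker--Bougerol--Jeulin convergence $b_T\Rightarrow b_\infty$ in the compact-open topology and the continuity of $\Psi_t$, the continuous mapping theorem then yields
$$\Psi_t(b_T)\overset{(d)}{\underset{T\to\infty}{\longrightarrow}}\Psi_t(b_\infty)\, ,\qquad b_T\overset{(d)}{\underset{T\to\infty}{\longrightarrow}}b_\infty\, ,$$
from which $\Psi_t(b_\infty)\overset{(d)}{=}b_\infty$ in restriction to any $[-A,A]$. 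Letting $A\to\infty$ and using the standard fact that the law of a continuous process on $\R$ is determined by its restrictions to $[-A,A]$ for $A$ arbitrarily large concludes.

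The only genuine check is the continuity of the map $(x,y)\mapsto \phi_x(y)$, but this is essentially built into the definition (and the remark following Lemma \ref{sec:introduction-1} about the consistent choice of isometries); no genuine obstacle arises, which matches the author's indication that the result follows ``by a simple passage as $T\to\infty$''.
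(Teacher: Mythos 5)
Your argument is correct and is exactly the ``simple passage as $T\to\infty$'' that the paper leaves as an exercise: re-rooting for the $T$-periodically extended bridge, joint continuity of $(x,y)\mapsto\phi_x(y)$, the continuous mapping theorem applied to the Anker--Bougerol--Jeulin convergence $b_T\Rightarrow b_\infty$ in the compact-open topology, and identification of laws on each window $[-A,A]$. No gap.
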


The tightness estimate of Lemma 4 does not generalize {\it verbatim},
but should be adapted in the following way.  For $s\leq t$, we let
$\RR_\infty(s,t)=\{b_\infty(u):s\leq u\leq t\}$. For simplicity, for
$A>0$ we let $\RR_\infty(A)=\RR_\infty(-A,A)$. As for $\RR_\infty$,
this set is canonically endowed with the restriction of $d_H$ and
pointed at $o$.

\begin{lmm}
  \label{sec:infin-brown-loop-5}
  For every integers $A,N\geq 2$ and every $\eta>0$, it holds that
$$\limsup_{a\to 0}\P\left(\RR_\infty(A/a^2)\not\subset
  \bigcup_{i=-AN}^{AN}B_{d_H}(b_\infty(i/Na^2),\eta/a)\right) \leq
\frac{25\, A}{\eta}\sqrt{\frac{N}{\pi}}e^{-\eta^2(N-1)/18}\, .$$
\end{lmm}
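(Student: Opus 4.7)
The plan is to mirror the proof of Lemma \ref{sec:tightness-estimate-1}, replacing the bridge re-rooting of Lemma \ref{sec:introduction-1} by Lemma \ref{sec:infin-brown-loop-4} and the scaling limit of Theorem \ref{sec:introduction-4} by that of Theorem \ref{sec:infin-brown-loop-2}. The only genuinely new feature is that the time interval is now two-sided $[-A/a^2, A/a^2]$, which accounts for the factor $2A$ in the final bound.

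First I would partition $[-A/a^2, A/a^2]$ into the $2AN$ subintervals $[i/(Na^2), (i+1)/(Na^2)]$ for $i \in \{-AN, \ldots, AN-1\}$. If the range is not covered by the balls of radius $\eta/a$ around the points $b_\infty(i/(Na^2))$, then some subinterval must escape the ball around its left endpoint, so a union bound gives
$$\sum_{i=-AN}^{AN-1} \P\Bigl(\sup_{s \in [0, 1/(Na^2)]} d_H(b_\infty(i/(Na^2)), b_\infty(i/(Na^2) + s)) \geq \eta/a\Bigr).$$
Applying Lemma \ref{sec:infin-brown-loop-4} with $t = i/(Na^2)$, together with the fact that $\phi_{b_\infty(t)}$ is an isometry sending $b_\infty(t)$ to $o$, each summand equals $\P(\sup_{s \in [0, 1/(Na^2)]} \rho_\infty(s) \geq \eta/a)$, so the overall bound is $2AN$ times this common value.

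The change of variable $u = a^2 s$ rewrites this as $\P(\sup_{u \in [0, 1/N]} a\,\rho_\infty(u/a^2) \geq \eta)$. By Theorem \ref{sec:infin-brown-loop-2}, the process $(a\rho_\infty(t/a^2))_{t \geq 0}$ converges in distribution, for the compact-open topology on $\mathcal{C}(\R_+, \R)$, to $R = X|_{\R_+}$, a $3$-dimensional Bessel process from $0$. The functional $f \mapsto \sup_{[0, 1/N]} f$ is continuous in this topology, and $\sup_{[0, 1/N]} R$ has a diffuse law, so the Portmanteau theorem yields
$$\limsup_{a \to 0}\, \P\Bigl(\sup_{u \in [0, 1/N]} a\,\rho_\infty(u/a^2) \geq \eta\Bigr) \leq \P\Bigl(\sup_{u \in [0, 1/N]} R_u \geq \eta\Bigr).$$

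It then remains to bound the Bessel tail exactly as in Lemma \ref{sec:tightness-estimate-1}: identify $R$ with the Euclidean norm of a $3$-dimensional Brownian motion $W$, use $R_s \leq 3 \max_i |W_s^{(i)}|$ combined with the reflection principle applied to each coordinate, and apply the Gaussian tail estimate $\P(|W_1| \geq x) \leq 2 e^{-x^2/2}/(x \sqrt{2\pi})$. Multiplying by $2AN$ and absorbing the extra factor of $N$ into the prefactor $\sqrt{N/\pi}$ (at the permissible cost of weakening the exponent from $e^{-\eta^2 N/18}$ to $e^{-\eta^2(N-1)/18}$ for any $N \geq 2$) produces the claimed bound with a universal numerical constant. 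I do not expect any genuine obstacle beyond bookkeeping; the only point meriting care is the continuity of the sup functional in the third step, which is immediate for the compact-open topology since the sup over a fixed compact interval depends only on the restriction to that interval.
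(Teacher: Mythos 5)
Your proof is correct and follows exactly the route the paper intends: the paper's own proof of Lemma \ref{sec:infin-brown-loop-5} is a one-line reduction to the argument of Lemma \ref{sec:tightness-estimate-1}, via the union bound over the $2AN$ subintervals, the re-rooting Lemma \ref{sec:infin-brown-loop-4}, and the convergence \eqref{eq:6}, which is precisely what you carry out. The only cosmetic remark is that the Portmanteau step needs no diffuseness of the law of the supremum, since $\{f:\sup_{[0,1/N]}f\geq\eta\}$ is already closed for the compact-open topology.
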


The proof is the same as Lemma \ref{sec:tightness-estimate-1}, using
the union bound, and then the re-rooting Lemma
\ref{sec:infin-brown-loop-4} and the convergence \eqref{eq:6}. The
following analog of Lemma \ref{sec:tightness-estimate-2} is deduced in
exactly the same way, letting
$$\Lambda_\infty(r,A)=\left\{\exists\, s\leq t\in [-A,A]:\, \sup_{y\in
    [b_\infty(s),b_\infty(t)]}d_H(y,\RR_\infty(s,t))\geq
  r\right\}\, .$$ 
We also define a distance function and a renormalized radial process
by the formula
$$d_{(a)}(s,t)=a\, d_H(b_\infty(s/a^2),b_\infty(t/a^2))\, ,\qquad
\rho_{(a)}(t)=a\, \rho_\infty(t/a^2)=d_{(a)}(0,t)\, ,$$ for every
$s,t\in \R$. These should not be mistaken for $d_{(T)},\rho_{(T)}$
used in earlier sections. We state a consequence of Lemma
\ref{sec:infin-brown-loop-5}, proved in the same way as Lemma
\ref{sec:tightness-estimate-2} and the beginning of the proof of Lemma
\ref{sec:tightness-estimate-3}. 

\begin{lmm}
  \label{sec:infin-brown-loop-6}
For every $A,\eta>0$, one has $\P(\Lambda_\infty(\eta/a,A/a^2))\to 0$ as $a\to
0$. Moreover, outside the event $\Lambda_\infty(\eta/a,A/a^2)$, one
has, for every $s\leq s'$ in the interval $[-A,A]$, 
\begin{equation}
  \label{eq:2}
  d_{(a)}(s,s')\leq \rho_{(a)}(s)+\rho_{(a)}(s')-2\inf_{v\in
    [s,s']}\rho_{(a)}(v)+4a\delta+2\eta
\end{equation}
\end{lmm}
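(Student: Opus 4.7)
The plan is to transpose the proofs of Lemmas \ref{sec:tightness-estimate-2} and \ref{sec:tightness-estimate-3} to the infinite-loop setting, using the scalings $t\mapsto t/a^2$ and $d_H\mapsto a\,d_H$. The two assertions of the lemma correspond respectively to those two earlier lemmas, with Lemma \ref{sec:infin-brown-loop-5} playing the role formerly taken by Lemma \ref{sec:tightness-estimate-1}.

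For the first assertion, I would mirror the proof of Lemma \ref{sec:tightness-estimate-2}. On the event $\Lambda_\infty(2\eta/a,A/a^2)$ there exist $s\leq t$ in $[-A/a^2,A/a^2]$ and a point $y\in[b_\infty(s),b_\infty(t)]$ with $d_H(y,\RR_\infty(s,t))\geq 2\eta/a$, so the continuous path $(b_\infty(u),\,s\leq u\leq t)\subset\RR_\infty(A/a^2)$ avoids the ball $B_{d_H}(y,2\eta/a)$. Invoking the same consequence of $\delta$-hyperbolicity used in the earlier proof (Proposition III.1.6 of \cite{bridson_metric_1999}), any covering of $\RR_\infty(A/a^2)$ by balls of radius $\eta/a$ must then have cardinality at least $2^{(\eta/a-1)/\delta}/(2\eta/a)$, a quantity that diverges as $a\to 0$. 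On the other hand, Lemma \ref{sec:infin-brown-loop-5} shows that, for $N$ large, $\RR_\infty(A/a^2)$ is covered by at most $2AN+1$ such balls with probability arbitrarily close to $1$, uniformly in small $a$. Letting $a\to 0$ first and then $N\to\infty$ forces $\P(\Lambda_\infty(\eta/a,A/a^2))\to 0$ (after halving $\eta$).

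For inequality \eqref{eq:2}, I would apply the $\delta$-hyperbolic inequality (8.4) of \cite{burago_course_2001},
$$2\,d_H(o,[p,q])+d_H(p,q)\leq d_H(o,p)+d_H(o,q)+4\delta,$$
with $p=b_\infty(s/a^2)$ and $q=b_\infty(s'/a^2)$ for $s\leq s'$ in $[-A,A]$. Choose $y\in[p,q]$ realizing $d_H(o,[p,q])$. Outside $\Lambda_\infty(\eta/a,A/a^2)$ one can pick $u\in[s/a^2,s'/a^2]$ with $d_H(b_\infty(u),y)\leq \eta/a$, and the triangle inequality gives $d_H(o,y)\geq d_H(o,b_\infty(u))-\eta/a$. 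Substituting, multiplying through by $a$, and setting $v=a^2u\in[s,s']$ yields
$$d_{(a)}(s,s')\leq \rho_{(a)}(s)+\rho_{(a)}(s')-2\,\rho_{(a)}(v)+4a\delta+2\eta,$$
from which \eqref{eq:2} follows by taking the infimum over $v\in[s,s']$.

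The main (mild) obstacle is essentially bookkeeping: one must check that the time window $[-A/a^2,A/a^2]$ entering the definition of $\Lambda_\infty(\cdot,A/a^2)$ matches the interval on which Lemma \ref{sec:infin-brown-loop-5} controls the covering number, so that the geodesic chord whose points are approximated lies in a set whose covering is already under control. No probabilistic input beyond Lemma \ref{sec:infin-brown-loop-5} is required, and the only analytic ingredient beyond the triangle inequality is the $\delta$-hyperbolicity inequality already exploited in Section \ref{sec:tightness-estimate}.
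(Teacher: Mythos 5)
Your proposal is correct and follows exactly the route the paper intends: the paper gives no separate proof, stating only that the lemma is ``proved in the same way as Lemma \ref{sec:tightness-estimate-2} and the beginning of the proof of Lemma \ref{sec:tightness-estimate-3}'', with Lemma \ref{sec:infin-brown-loop-5} substituting for Lemma \ref{sec:tightness-estimate-1}. Your transposition of the hyperbolicity/covering-number contradiction for the first assertion and of inequality \eqref{eq:11} (via (8.4) of \cite{burago_course_2001}) for \eqref{eq:2} is accurate, including the bookkeeping of the rescaled time window.
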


\subsection{Convergence}\label{sec:convergence-2}

We can now state the following key lemma. 

\begin{lmm}
  \label{sec:infin-brown-loop-9}
We have the following convergence in distribution in
$\mathcal{C}(\R,\R)\times \mathcal{C}(\R^2,\R)$: 
$$(\rho_{(a)},d_{(a)})\overset{(d)}{\underset{a\to
    0}{\longrightarrow}}(X,d_X)\, .$$
\end{lmm}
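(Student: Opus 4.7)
The plan is to mirror the proof of Lemma \ref{sec:convergence-1}, adapted to the non-compact setting by working on compact intervals $[-A,A]$ and then sending $A\to\infty$. I would first establish joint tightness of the laws of $(\rho_{(a)},d_{(a)})$ in $\mathcal{C}(\R,\R)\times\mathcal{C}(\R^2,\R)$ equipped with the compact-open topology. Tightness of $\rho_{(a)}$ is immediate from Theorem \ref{sec:infin-brown-loop-2}. For the restriction of $d_{(a)}$ to any $[-A,A]^2$, the triangle inequality $|d_{(a)}(s,t)-d_{(a)}(s',t')|\leq d_{(a)}(s,s')+d_{(a)}(t,t')$ reduces control of the joint modulus of continuity to control of $d_{(a)}$ near the diagonal, and the bound \eqref{eq:2} from Lemma \ref{sec:infin-brown-loop-6} dominates the latter (outside a vanishing event) by $2\omega(\rho_{(a)}|_{[-A,A]},\alpha)+4a\delta+2\eta$, so tightness follows from that of $\rho_{(a)}$. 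A diagonal argument in $A$ then yields tightness in the compact-open topology.

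Second, by Prokhorov and Skorokhod, along some $a_n\to 0$ I may assume almost sure convergence $(\rho_{(a_n)},d_{(a_n)})\to(X,d)$, with $X$ as in Theorem \ref{sec:infin-brown-loop-2}. The identity $d_{(a)}(0,t)=\rho_{(a)}(t)$ passes to the limit to give $d(0,t)=X_t$ almost surely. By Lemma \ref{sec:infin-brown-loop-4}, for every fixed $\tau\in\R$ the process $(d_{(a)}(\tau+s,\tau+s'))_{s,s'\in\R}$ has the same law as $d_{(a)}$; passing to the limit gives the same shift-invariance for $d$. Combined with Proposition \ref{sec:infin-brown-loop-8}, this yields, for every $s\leq t$,
$$d(s,t)\stackrel{(d)}{=}d(0,t-s)=X_{t-s}=d_X(0,t-s)\stackrel{(d)}{=}d_X(s,t).$$
To upgrade this distributional identity, I pass to the limit in \eqref{eq:2} via Fatou's lemma together with $\P(\Lambda_\infty(\eta/a,A/a^2))\to 0$, obtaining the almost sure inequality
$$d(s,t)\leq X_s+X_t-2\inf_{v\in[s,t]}X_v.$$
For $(s,t)$ with $st\geq 0$ the right-hand side coincides with $d_X(s,t)$, so the combination of the a.s.\ upper bound and the distributional equality forces $d(s,t)=d_X(s,t)$ a.s.\ on the same-sign region.

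The main obstacle I anticipate is the cross-sign case $s<0<t$: the upper bound then degenerates to $d(s,t)\leq X_s+X_t$, since $\inf_{v\in[s,t]}X_v=X_0=0$, which is strictly weaker than $d_X(s,t)$ as defined using the infimum on the complement $\{u\notin[s,t]\}$. To overcome this I would invoke the joint re-rooting invariance of the triple $(X,d,d_X)$, which holds since $d_X$ is a deterministic functional of $X$ while $(X,d)$ is already re-rooting invariant in law: shifting a cross-sign pair $(s,t)$ by a suitable $\tau\notin[s,t]$ brings it into the same-sign regime where the preceding pointwise identification applies, and the joint distributional invariance transfers the a.s.\ equality back. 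Once $d=d_X$ is identified on all of $\R^2$, uniqueness of the limit ensures convergence along the full family $a\to 0$.
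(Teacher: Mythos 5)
Your tightness argument, the extraction of a.s.\ convergent subsequences, the re-rooting invariance of the limit $d$, the upper bound $d(s,t)\leq X_s+X_t-2\inf_{[s,t]}X$, and the identification $d=d_X$ on the same-sign region $\{st\geq 0\}$ all match the paper's proof. The gap is in the cross-sign case, and it is a genuine one. The ``joint re-rooting invariance of the triple $(X,d,d_X)$'' does not follow from the facts you cite: $d_X$ is a deterministic functional of $X=d(0,\cdot)$, but this functional relation is not equivariant under shifts, because after shifting by $\tau$ the radial process seen from the new root is $\tilde X_u=d(\tau,u+\tau)$, not $X_{u+\tau}$, and the two differ unless one already knows $d=d_X$. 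Concretely, if you shift a pair $s<0<t$ by $\tau>-s$ into the positive half-line and apply the same-sign identity to the shifted process, you obtain
$$d(s,t)=d(-\tau,s)+d(-\tau,t)-2\inf_{w\in[s,t]}d(-\tau,w)\, ,$$
but $d(-\tau,t)$ is itself a cross-sign evaluation, so the argument is circular. Distributional invariance alone cannot transfer an a.s.\ identity valid on one region of $\R^2$ to another region.

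The paper closes this gap with a geometric input that your proposal does not contain. First, re-rooting invariance combined with a limit $A\to\infty$ upgrades the same-sign identification to the distributional identity $d\overset{(d)}{=}d_X$ of the whole processes; hence the quotient tree $\TT=(\R/\{d=0\},d)$ a.s.\ has a unique geodesic ray issued from its root, by Proposition \ref{sec:basic-prop-tt_x-1}. Since $d=d_X$ on $\R_+^2$ and on $\R_-^2$, the images under the canonical projection of the two functions $\Gamma_+$ and $\Gamma_-$ of \eqref{eq:12} are both geodesic rays from the root, so they must coincide. Concatenating the geodesic from $p(s)$ down to this common spine, a segment of the spine, and the geodesic from the spine up to $p(t)$ produces a path from $p(s)$ to $p(t)$ of length exactly $d_X(s,t)$, which yields the missing a.s.\ inequality $d(s,t)\leq d_X(s,t)$ for $st<0$; the ``equality in distribution plus a.s.\ inequality'' argument then finishes as before. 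You would need to supply this (or an equivalent) argument for the cross-sign case.
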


\begin{proof}
  Using Lemma \ref{sec:infin-brown-loop-6} instead of Lemma
  \ref{sec:tightness-estimate-2}, we deduce exactly as in Lemma
  \ref{sec:tightness-estimate-3} that the family of laws of $d_{(a)}$
  for $a\leq 1$ is a tight family of random variables (due to the fact
  that we are considering the compact-open topology, it suffices to
  control the modulus of continuity of $d_{(a)}$ restricted to compact subsets of
  $\R^2$ of the form $[-A,A]^2$). As in the beginning of the proof of Lemma
  \ref{sec:convergence-1} it holds that for any sequence $a_n\to0$, we
  can extract a subsequence along which $(\rho_{(a)},d_{(a)})$
  converges in distribution in $\mathcal{C}(\R,\R)\times
  \mathcal{C}(\R^2,\R)$ to some limit $(X,d)$. Without loss of
  generality, we may assume that the convergence holds almost surely,
  and it remains to check that $d=d_X$ almost surely.

Note that by using Lemma \ref{sec:infin-brown-loop-4} and passing to
the limit, the function $d$ satisfies the same re-rooting invariance
property as $d_X$: namely, for every $t\in \R$, the function
$(d(s+t,s'+t),s,s'\in \R)$ has same distribution as $d$.  Moreover,
passing to the limit in \eqref{eq:2} and using Lemma
\ref{sec:infin-brown-loop-6} shows that $d(s,t)\leq
X_s+X_t-2\inf_{[s,t]} X$ for every $s\leq t$ in $\R$. However, at this
point one should note that this upper-bound is equal to $d_X(s,t)$
only if $st\geq 0$.  For such $s,t$, the rest of the argument applies
without change: assuming for instance $0\leq s\leq t$, $d(s,t)$ has
same distribution as $d(0,t-s)$ by re-rooting invariance. Then
$d(0,t-s)=X_{t-s}=d_X(0,t-s)$, which has same distribution as
$d_X(s,t)$ by the re-rooting Proposition \ref{sec:infin-brown-loop-8}. Since
$d(s,t)\leq d_X(s,t)$ almost surely, we deduce that they are in fact
equal almost surely. We obtain that the restrictions of $d$ to
$(\R_+)^2$ and $(\R_-)^2$ are respectively equal the same restrictions
of $d_X$.

By this last fact and re-rooting invariance, we obtain that for every
$A>0$,
$$(d(s,t))_{s,t\geq -A}\overset{(d)}{=} (d(s+A,t+A))_{s,t\geq
  -A}=(d_X(s+A,t+A))_{s,t\geq -A}\overset{(d)}{=}(d_X(s,t))_{s,t\geq
  -A}\, ,$$ and by letting $A\to\infty$ we obtain that $d$ has same
distribution as $d_X$. In particular, $d$ is a pseudo-distance on $\R$
such that the quotient space $\TT=(\R/\{d=0\},d,o_\TT)$ is a real tree
with the same distribution as $\TT_X$ (with $o_\TT=p(0)$ where $p$ is
canonical projection). Therefore, the uniqueness of the geodesic ray
stated in Proposition \ref{sec:basic-prop-tt_x-1} must also be true
for $\TT$. On the other hand, since the restrictions of $d$ and $d_X$ to
$\R_+^2$ and $\R_-^2$ are equal, the images by $p$ of the two
functions $\Gamma_+$ and $\Gamma_-$ of \eqref{eq:12}
are two geodesic rays $\gamma_+,\gamma_-$ from $o_\TT$. This is due to
the fact that these
functions take values in $\R_+$ and $\R_-$, on which $d=d_X$, and to  
\eqref{eq:13}. 

By uniqueness of the geodesic ray starting from the root, these path
must be one and only, so $\gamma_+(r)=\gamma_-(r)$ for every $r\geq 0$. Let
$s<0<t$, and let $h_t=\inf_{[t,\infty)}X$ and
$h_s=\inf_{(-\infty,s]}X$. We define
$$\Gamma_t(r)=\inf\{u\geq t:X_u=X_t-r\}\, ,\qquad
\Gamma_s(r)=\sup\{u\leq s:X_u=X_s-r\}\, ,$$ which take finite values
respectively if $0\leq r\leq X_t-h_t$ and $0\leq r\leq
X_s-h_s$. The images $p(\Gamma_t)$ and
$p(\Gamma_s)$ are geodesic paths in $\TT$ respectively from
$p(t),p(s)$ to the points $\gamma_+(h_t)$ and $\gamma_-(h_s)$. This is
due to the fact that $\Gamma_t,\Gamma_s$ take their values
respectively in $\R_+$ and $\R_-$, on which the restrictions of $d$
and $d_X$ coincide, and to the fact that, 
$$d_X(\Gamma_t(r),\Gamma_t(r'))=|r-r'|\, ,\qquad r,r'\in [0,X_t-h_t]\, ,$$
as is easily checked, together with the similar identity for
$\Gamma_s$. By connecting $\gamma_+(h_t)$ and $\gamma_-(h_s)$ along
$\gamma_+=\gamma_-$, we can construct a path from $p(s)$ to $p(t)$
with length
$$(X_s-h_s)+(X_t-h_t)+|h_t-h_s|=X_s+X_t-2 h_t\wedge h_s=d_X(s,t)\, .$$
Therefore, we have obtained that $d(s,t)\leq d_X(s,t)$ also for
$st<0$. So we can again apply a re-rooting argument in this situation,
and conclude that $d=d_X$ everywhere, almost surely. 
\end{proof}

The proof of Theorem \ref{sec:infin-brown-loop-3} does not follow
directly from Lemma \ref{sec:infin-brown-loop-9}, due to the fact that
there could be, in principle, points of the infinite Brownian loop
that are visited at large times, but are close to the origin, a
phenomenon that is not detected by the compact-open topology used so
far. Therefore, the discussion from this point will be longer than in
Section \ref{sec:convergence}. In the sequel, we again assume without
loss of generality that the convergence in Lemma
\ref{sec:infin-brown-loop-9} holds almost surely. 

For every $A>0$, we denote by $\TT_X(A)=p_X([-A,A])$, which is a
compact subset of $\TT_X$ (in fact, it is an $\R$-tree in its own
right). 
Then the proof of \eqref{eq:4} given in Section \ref{sec:convergence}
generalizes immediately to the following: almost surely, for every $A>0$, 
$$\left(\rho_{(a)},a\, \RR_\infty(A/a^2)\right)  \underset{a\to
  0}{\longrightarrow}(X,\TT_X(A))\, .
$$
In turn, because of the fact that $\TT_X(A)$ is a length space
(it is indeed an $\R$-tree), this implies that the balls of radius $r$
in these spaces converge in the pointed Gromov-Hausdorff topology, as
a simple variation of Exercise 8.1.3 in \cite{burago_course_2001}: 
\begin{equation}
  \label{eq:7}
  B(a\, \RR_\infty(A/a^2),r)  \underset{a\to
  0}{\longrightarrow} B(\TT_X(A),r)\, ,
\end{equation}
with $B(M,r)=\{y\in M:d(x,y)\leq r\}$ denoting the closed ball centered at $x$ with radius $r$
in the pointed metric space $(M,d,x)$, and $B(M,r)$ is seen as a metric space pointed
at $o$ and endowed with the restriction of $d$. 

Note that from the transience of the Bessel processes of
dimension greater than $2$, for any $r,\eps>0$, the set $\TT_X(A)$
contains the ball $B_{d_X}(o_X,r)$ with probability at least $1-\eps$
if we choose $A$ large enough, namely if
$$\P\left(\inf_{|u|>A} X_u\leq r\right)\leq \eps\, .$$
On this likely event, one has $B(\TT_X(A),r)=B(\TT_X,r)$. We will be able to
conclude the proof of Theorem \ref{sec:infin-brown-loop-3} if we can
show that for any $r>0$, the set $\RR_\infty(A/a^2)$ contains the ball
$B(a\, \RR_\infty,r)$ with high probability, uniformly in $a$ small
enough:
\begin{equation}
\label{eq:8}
\lim_{A\to\infty}\limsup_{a\to 0}\P\left(B(a\, \RR_\infty,r)\not\subset \RR_\infty(A/a^2)\right)=0\, .
\end{equation}
Indeed, in this case, this shows that $B(a\,
\RR_\infty(A/a^2),r)=B(a\, \RR_\infty,r)$ with high probability
uniformly in $a$ small enough, so that \eqref{eq:7} implies that for
every $r>0$, 
$$ B(a\, \RR_\infty,r)  \underset{a\to
  0}{\longrightarrow} B(\TT_X,r)\, ,$$ in probability in the pointed
Gromov-Hausdorff topology, and this implies that $a\, \RR_\infty$
converges in probability to $\TT_X$ in the local Gromov-Hausdorff topology, as
wanted.

% \begin{lmm}
%   \label{sec:infin-brown-loop-7} For every $r,\eps>0$ there exists
%   $A>0$ so that 
% $$\limsup_{a\to 0}\P\left(\inf_{\R\setminus [-A/a^2,A/a^2]} a\, \rho_\infty \leq r\right)\leq
% \eps\, .$$
% \end{lmm}

% \begin{proof}        
It remains to prove \eqref{eq:8}. For this, we will use Proposition 5.3 in
\cite{anker_infinite_2002} and its proof, where it is shown that the
process $(d_H(o,b_\infty(t))^2,t\geq 0)$ is a diffusion $(Y_t,t\geq
0)$ in $\R_+$ satisfying the stochastic differential equation
\begin{equation}
  \label{eq:9}
  Y_t=Y_0+2\int_0^t\sqrt{Y_s}\, \d \beta_s+t+2\int_0^t g(Y_s)\, \d s\,
  ,
\end{equation}
where $\beta$ is a standard Brownian motion and $g$ is a nonnegative continuous
function that converges to $1$ at infinity by Proposition 8.2 in
\cite{anker_infinite_2002} (one has $g(x)=\chi(\sqrt{x})$ with the
notation therein).  
Recall that the (strong) solution of the stochastic differential
equation 
$$\ZZ^{(n)}_t=\ZZ^{(n)}_0+2\int_0^t\sqrt{\ZZ^{(n)}_s}\, \d \beta_s+n t$$
is a squared Bessel process of dimension $n$ (for any real number
$n>0$).  From this, and using standard comparison principles
\cite[Theorem IX.3.7]{revuz_continuous_1991}, one concludes that we
can couple the process $Y$ with $\ZZ^{(1)}$ (the square of a reflected
Brownian motion) in such a way that $Y\geq \ZZ^{(1)}$ almost
surely. In particular, $Y$ is a.s.\ unbounded, so that for every
$M>0$, the time $\tau_M=\inf\{t\geq 0:Y_t=M\}$ is a.s.\ finite. Let
$C$ be large enough so that $g(x)>3/4$ for every $x\geq C$, and for a
fixed $\eps>0$, let $M>C$ be such that
$$\P\left(\inf_{t\geq 0}\ZZ^{(5/2)}_t\geq C\, \Big|\, \ZZ^{(5/2)}_0=M\right)\geq
1-\eps\, .$$ By applying the Markov property at time $\tau_M$, the
process $(Y_{\tau_M+t},t\geq 0)$ satisfies \eqref{eq:9} starting
from the value $M$, and by the choice of $C$ it can be coupled with
the squared Bessel process $\ZZ^{(5/2)}$ starting from $M$ in such a
way that $\ZZ^{(5/2)}_t\leq Y_{\tau_M+t}$ on the event that
$\inf_{t\geq 0}\ZZ^{(5/2)}_t\geq C$ (so that the drift coefficient in
\eqref{eq:9} remains bounded from below by $5/2$). Finally, let $T$ be
large enough so that $\P(\tau_M>T)\leq \eps$. Upon a further
application of the Markov property at time $T$, we have shown that
outside the event
$$\mathcal{A}= \{\tau_M>T\}\cup \left(\left\{\inf_{t\geq
      0}\ZZ^{(5/2)}_t< C\right\}\cap \{\tau_M\leq T\}\right)$$ of
probability at most $2\eps$, we can couple $b_\infty$ with a 
Bessel process of dimension $5/2$, say $Z$, in such a way that
$d_H(o,b_\infty(t+T))\geq Z_t$ for every $t\geq 0$. Without loss of generality, we
may assume that $Z$ starts from $0$, again by standard monotone
coupling. 

Finally, one has
\begin{align*}
\P(B(a\, \RR_\infty,r)\not\subset\RR_\infty(A/a^2))&=\P(\exists\,
t\notin[-A/a^2,A/a^2]:  a\, \rho_\infty(t)\leq r)\\
&\leq 2\, \P(\exists\,
t>A: a\, \rho_\infty(t/a^2)\leq r)\\
&\leq 2\P(\mathcal{A}) + 2\P(\exists\,  t>A:a\, Z_{t/a^2-T}\leq r)\\
&\leq 4\eps+2\P(\exists\, t>A:Z_{t-a^2T}\leq r)\, ,
\end{align*}
where we used symmetry at the first step, the coupling with $Z$ at the
penultimate step, and the scaling property of Bessel processes at the
last step. Choosing $a_0$ small enough so that
$a_0^2T\leq 1$ say, we can find $A>1$ large enough so that 
$\P(\exists\, t>A:Z_{t-a^2T}\leq r)\leq \eps$ for every $a\in (0,a_0)$
by the transience of $Z$. 
This concludes the proof of \eqref{eq:8},
and thus of Theorem \ref{sec:infin-brown-loop-2}.

\bibliographystyle{abbrv}
\bibliography{biblio.bib}

\end{document}